\newtheorem{theorem}{Theorem}[section]
\newtheorem{lemma}{Lemma}[section]
\newtheorem{corollary}{Corollary}[section]
\newtheorem{proposition}{Proposition}[section]
\theoremstyle{definition}
\newtheorem{definition}{Definition}[section]
\theoremstyle{remark}
\newtheorem{remark}{Remark}[section] 
\newcommand{\n}{\rho}
\newcommand{\ti}{\tilde}
\renewcommand{\div}{ {\rm div}}
\newcommand{\na}{\nabla }
\newcommand{\ga}{\gamma}
\newcommand{\de}{\delta}
\newcommand{\la}{\label}
\newcommand{\bnn}{\begin{eqnarray*}}
\newcommand{\enn}{\end{eqnarray*}}
\newcommand{\ba}{\begin{aligned}}
\newcommand{\ea}{\end{aligned}}
\newcommand{\be}{\begin{equation}}
\newcommand{\ee}{\end{equation}}
\def\p{\partial}
\def\lap{\triangle}
\def\ep{\varepsilon}
\def\a{\alpha}
\def\om{\Omega}
\def\R{\mathbb{R}}
\title{Energy equality in compressible fluids with physical boundaries}
\author[R. M. Chen]{Robin Ming Chen}
\address{Department of Mathematics, University of Pittsburgh, Pittsburgh, PA 15260} 
\email{mingchen@pitt.edu}
\author[Z. L. Liang]{Zhilei Liang}
\address{School of Economic Mathematics, Southwestern  University of Finance and Economics, Chengdu  611130,  China} 
\email{zhilei0592@gmail.com}
\author[D. Wang]{Dehua Wang}
\address{Department of Mathematics, University of Pittsburgh, Pittsburgh, PA 15260} 
\email{dwang@math.pitt.edu}
\author[R. Xu]{Runzhang Xu}
\address{College of Science, Harbin Engineering University, Harbin 150001, P. R. China} 
\email{xurunzh@hrbeu.edu.cn}
\keywords{Energy conservation, Navier-Stokes equations, weak solutions, bounded domain}
\subjclass[2010]{35B07, 35B20, 35D30; 76J20, 76L99, 76N10}
\date{\today}
\begin{document}

\maketitle   

 \begin{abstract} 
We study the energy balance for the weak solutions of the three-dimensional compressible Navier--Stokes equations in a bounded domain. We  establish an $L^{p}$-$L^{q}$ regularity condition  on the velocity field for the energy equality to hold, provided that the density is bounded and satisfies $\sqrt{\n} \in L^\infty_t H^1_x$.    
The main idea is to construct a global mollification combined with an independent boundary cut-off, and then take a double limit to prove the convergence of the resolved energy.
\end{abstract}


 \section{Introduction}
 
In fluid mechanics, compressible fluids play an important role in many fields of applications, including astrophysics (star-formation, interstellar/intergalactic medium), engineering (supersonic aircraft, gas turbines, combustion engines), and so on. 
In this paper we consider the following three-dimensional Navier--Stokes equations of isentropic compressible flows, consisting of the conservation of mass and momentum,
\begin{equation}\label{1.1}
\left\{\ba
& \n_{t}+{\rm div} (\n u)=0,\\
& (\n u)_t +{\rm div}(\n u\otimes u )+ \na P(\rho) 
-\mu\lap u-(\mu+\lambda)\na \div u=0, \ea \right.
\end{equation}
where $\rho \ge 0$ is the   density of the flow, $u\in\R^3$ is the velocity, and $P(\n) = \rho^\ga$ is the pressure 
with    $\ga > 1$ . The viscosity constants include the shear viscosity $\mu > 0$ and the bulk viscosity $\lambda$ satisfying  $\lambda + {2\over 3}\mu \ge 0$. 

We are particularly interested in the behavior of the compressible flows confined within solid walls. Such flows are ubiquitous in nature as well as in applications.     Mathematically we consider the above system \eqref{1.1} in an open bounded domain $\om \subset \mathbb{R}^3$ and pose the usual no-slip boundary condition
\be\la{1a}u=0\,\,\,\,{\rm on}\,\,\,\p\om.\ee 
Finally we complement \eqref{1.1} with the initial condition
\be\la{1b} \n(x,0)=\n_{0}(x),\quad (\n u)(x,0)=(\n_{0}u_{0})(x),\quad x\in \om,\ee
where we define $u_{0}=0$ on the sets $\{x\in \om\,\,:\,\,\n_{0}=0\}.$

System \eqref{1.1}--\eqref{1b} possesses an energy balance law that  holds at least formally for strong solutions:
\be\la{energy} 
\begin{split}
\int_{\om}\left(\frac{1}{2}\n|u|^{2}+\frac{\n^{\ga}}{\ga-1}\right) dx\  + & \int_{0}^{t}\int_{\om} \left( \mu|\na u|^{2}+(\mu+\lambda)(\div u)^{2} \right)\,dxds \\
& = \int_{\om}\left(\frac{1}{2}\n_0|u_0|^{2}+\frac{\n_0^{\ga}}{\ga-1}\right)dx.
\end{split}
\ee
On the other hand, from the classical results of Lions \cite{lion2} and Feireisl \cite{Feireisl1,fei}, this system also allows for solutions with less regularity, namely the weak solutions (see below), which only satisfy an energy inequality. 
\begin{definition} \la{defi}  
For a given $T>0$, we call $(\n,u)$ a weak solution on $[0, T]$ to \eqref{1.1}--\eqref{1b} if 
\begin{itemize}
\item The problem \eqref{1.1}-\eqref{1b}  holds  in $\mathcal{D}'([0,T)\times \om)$ and
\begin{equation}\label{1.12}
 \n^{\ga},\,\,\n |u|^{2}  \in L^{\infty}\left(0,T;  L^{1}(\om)\right),\quad   u \in L^{2}\left(0,T;H_{0}^{1}(\om) \right).
\end{equation}   
\item $(\n,u)$ is a renormalized solutions of $\eqref{1.1}_{1}$ in the sense of  \cite{di}.  
\item The energy inequality holds  
\be\la{1.8} 
\begin{split}
\int_{\om}\left(\frac{1}{2}\n|u|^{2}+ \frac{\n^{\ga}}{\ga-1}\right) dx & +\int_{0}^{t}\int_{\om}\left( \mu|\na u|^{2}+(\mu+\lambda)(\div u)^{2} \right) dxds \\
& \le  \int_{\om}\left(\frac{1}{2}\n_{0}|u_{0}|^{2}+\frac{\n_{0}^{\ga}}{\ga-1}\right)dx.
\end{split}
\ee
\end{itemize}
\end{definition}

The lack of the exact equality in \eqref{1.8} is reminiscent of the energy inequality of the Leray--Hopf solution to the incompressible Navier--Stokes equations, which still remains open up to date. This question is very well motivated from physical grounds. 
 Validity of  the energy equality \eqref{energy} would rule out the possibility of interior anomalous energy dissipation (i.e., the energy dissipation does not vanish as the viscosity goes to zero), an effect experimentally observed or numerically evidenced in turbulent flows \cite{sr,ka,pe}.   This  is also associated with weak solutions of the Euler equations in the framework of the celebrated Onsager conjecture \cite{on}.

One of the main difficulties in establishing the energy equality in the absence of the boundary (i.e. $\om = \mathbb{R}^3$ or $\mathbb{T}^3$) lies in the fact that the regularized velocity field and density may generate a non-vanishing energy flux due to the nonlinear coupling. For incompressible flows with constant density, J. L. Lions \cite{lions} proved that energy equality holds for $u\in L^4_{t,x}$. This was reproduced by Lady$\breve{\textrm{z}}$enskaja et al. \cite{lady} in the general context of parabolic equations. In \cite{se} Serrin gave a dimension-dependent condition $u\in L^p_tL^q_x$ for ${2\over p} + {n\over q} \le 1, \ q > n$,  where $n$ is the space dimension. 
Later Shinbrot in \cite{shi} removed the dimensional dependence and improved the conditions to  ${2\over p} + {2\over q} \le 1, \ q \ge 4$.
An alternative proof of Shinbrot's result can be found in \cite{yu3}. New types of conditions have been obtained recently, including Besov-type regularity conditions \cite{che,de}, weak-in-time with optimal Onsager spatial regularity conditions \cite{che2}, new $L^p_tL^q_x$ conditions in combination with low dimensionality of the singular set \cite{shv2}, to name a few. For inhomogeneous incompressible flows Leslie-Shvydokoy \cite{shv}  proved the energy equality in Besov spaces.  Concerning compressible fluids, the theoretical study is more recent. In \cite{de1,de2} Drivas-Eyink followed the approaches of \cite{ey,con,du} in the framework of Onsager's theory to derive necessary conditions for dissipative anomalies
of kinetic energy in turbulent solutions of the compressible Euler equations. Feireisl et al. \cite{fei1} gave sufficient Besov regularity conditions on the weak solutions for energy conservation of the compressible Euler system, excluding the case of vacuum. Regularity conditions for energy conservation which allow the presence of vacuum in the compressible Euler flow were provided  by Akramov et al. \cite{ADSW}.  For the energy equality of the compressible Navier--Stokes \eqref{1.1}, Yu \cite{yu1} proved that \eqref{energy} holds true if the velocity variable $u$ satisfies an $L^p_tL^q_x$ condition while the density $\rho$ is bounded 
and  $\sqrt{\n}\in L^{\infty}(0,T; H^{1})$.
%

The presence of solid boundaries makes the dissipative mechanism more complex. Vortical structures are organized in the viscous boundary layers that detach from the walls. The boundary layer becomes thinner as the viscosity decreases and generates sharp velocity gradients which can propagate into the bulk of the fluid, making the interior velocity field irregular to sustain anomalous energy dissipation. Therefore mathematically, the added challenge comes from controlling the regularity of the solutions near the boundary in order to pass from local to global energy balance. The first result addressing the Onsager's theory for wall-bounded flows is due to Bardos--Titi \cite{bar} in the context of the incompressible Euler equations under the assumption of a global regularity on the velocity. Such a result was further refined by  Bardos--Titi--Wiedemann \cite{bar1} and Drivas--Nguyen \cite{ngu} 
where a weaker assumption is used that is consistent with the formation of the boundary layer in the vanishing viscosity limit. In line with the method of \cite{bar1}, Akramov et al. \cite{ADSW} were able to treat the case of compressible Euler flows confined in a bounded domain.  In \cite{BGGTW}, Bardos et al. managed to extend and prove the Onsager conjecture for a class of conservation laws that admit a generalized entropy. 
The idea of \cite{bar} was also exploited by Yu \cite{yu2} for the incompressible Navier--Stokes equations in a bounded domain, obtaining the same Shinbrot type interior regularity criterion, with an additional Besov regularity on the velocity to handle the boundary effects coming from the diffusion term. 

The basic strategy used in \cite{ADSW,bar,bar1,ngu,yu2} is localization. Specifically, an additional cut-off function was introduced that separates the boundary part from the interior domain. The distance $h$ from the support of the boundary cut-off to the boundary is chosen to be large enough compared with the scale $\ep$ for the mollification, leaving {enough space} to mollify the interior velocity. This way the interior regularity criterion can be achieved following the classical commutator estimates in the spirit of \cite{ey,con,du}. To obtain the global energy balance, one needs to patch the interior estimates with the estimates on the boundary layer. This is done by carefully examining the scale-transfer terms in the bulk and at the boundary. 
For the incompressible Euler equations, to ensure that the inertial boundary production vanishes in the double limit $\ep, h \to 0$, one needs to assume continuity of the normal component of the energy flux near the boundary \cite{bar1}, which is essentially equivalent to assuming continuity of the near-wall normal velocity \cite{ngu} due to the non-penetration boundary condition. The case for the incompressible Navier--Stokes is slightly more delicate. The boundary production includes an additional contribution coming from the diffusion term, which involves the information about the velocity gradient near the boundary. However, such information cannot be inferred from the no-slip boundary condition, and this is the reason why in \cite{yu2} an extra Besov regularity on the velocity is assumed. 

\subsection{Methodology}

The goal of this paper is to  understand the relation between the energy equality and the regularity of the solutions in the appearance of the boundary.
For this purpose we shall  introduce a  new  approach  different from \cite{bar} and apply it to the compressible Navier--Stokes system \eqref{1.1}--\eqref{1b}. 
As a result of our new approach, we can avoid assuming additional regularity of the velocity near the boundary as in \cite{yu2} in order to deal with the boundary production due to the diffusion terms.
To the best of our knowledge, our paper appears to be the first work addressing the energy balance of flows both in the compressible regime and in a bounded domain.

In the paragraphs below we briefly describe the ideas of our method.

\medskip

\noindent{\it Global mollification.} The approach we propose in this paper is ``global" in the sense that we do not shrink the domain $\om$ to create space for the mollification. Instead, the mollified functions are defined globally in the whole domain $\om$. Roughly speaking, the interior mollification will be the same as in the general localization approach. However, for the boundary part, when $\partial\om$ is reasonably smooth, we introduce a local variable shift toward the interior of $\om$ and then perform the usual mollification. Finally we obtain a global approximation by gluing together the boundary and interior parts using a partition of unity. The details are given in Section \ref{subsec_global}. We want to point out that such an approximation is in the spirit of the one discussed in \cite[Section 5.3]{evans}.

The regularization of the momentum equation in \eqref{1.1} can be done the same way: performing  local mollifications, and then summing them up according to the partition of unity.

\medskip

\noindent{\it Test functions.} The global approximation avoids cutting out the boundary information, at the price that the mollified velocity field fails to vanish on the boundary. Therefore one still needs to introduce a boundary cut-off function supported $\delta$-distance away the boundary (cf. \eqref{phi}), and multiply it to the mollified velocity to construct the test function. The difference, compared with \cite{bar,bar1,ngu,yu2}, is that the mollification scale $\ep$ and boundary cut-off scale $\delta$ are completely independent. This leaves much freedom for the choice of $\delta$ and could be useful for other applications, for instance, the study of anomalous dissipation in the vanishing viscosity limit, which will be addressed in a forthcoming paper \cite{cl}.

\medskip

\noindent{\it Boundary production due to diffusion.} Similarly to \cite{yu2}, the inertial boundary production includes terms that involve the gradient of the velocity field $\nabla u$ which comes from the diffusion terms. As explained earlier, it is hard to control such terms directly due to the lack of boundary condition on $\nabla u$. Here we will first pass the limit as $\ep \to 0$, leaving $\delta$ fixed, so that we recover the full velocity in the resulting approximated energy equality \eqref{m3}. This allows us to employ the classical Hardy type inequality (cf. Lemma \ref{lemma}) to annihilate the boundary contribution from the diffusion terms. Note that the crucial ingredient in this argument is the fact that $\ep$ and $\delta$ are independent.

\medskip

\noindent{\it Commutator estimates.} In proving energy conservation/equality, the
commutator estimates are required for treating the nonlinear terms. Compared with incompressible homogeneous equations, a notable difference in compressible (or inhomogeneous) equations is that the momentum equation contains a time derivative of a nonlinear term $\n u$, and hence it needs a commutator estimate in time. We follow the ideas in  \cite{yu1} 
in order to allow for vacuum states, with slight modifications to work in the Sobolev spaces; see Corollary \ref{co}.   

\subsection{Main results}
Our energy equality criterion for the compressible Navier--Stokes equations \eqref{1.1}--\eqref{1b} is
\begin{theorem}\la{t1} 
Let  $\om$ be an  open, bounded domain with   $C^{1}$ boundary $\p \om$, and   $(\n, u)$ be   a weak solution in  Definition \ref{defi}.  Assume that  
\be\la{1.5a}
\ba& 0\le \n\le \bar{\n}<\infty,\quad \na \sqrt{\n}\in L^{\infty}\left(0,T;L^{2}(\om)\right).\ea
\ee  
 If \be\la{1.5}\ba&   u\in L^{p}(0,T;L^{q}(\om)),\quad p\ge4,\,q\ge6,\ea\ee
 and moreover, 
 \be\la{1.5s} u_{0}\in L^{q_{0}}(\om),\quad q_{0}>3.\ee  
Then  \eqref{energy}  holds for any $t\in[0, T]$.  
\end{theorem}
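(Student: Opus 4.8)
The plan is to prove the energy equality by regularizing the weak solution using the global mollification scheme described in the methodology, multiplying by a suitable test function built from the boundary cut-off, and then passing to two successive limits. First I would set up the machinery: fix a boundary cut-off function $\phi_\delta$ that vanishes within $\delta$-distance of $\partial\om$ and equals $1$ well inside $\om$, with $|\nabla\phi_\delta|\lesssim \delta^{-1}$, and introduce the global mollification operator $S_\ve$ adapted to $\om$ (interior mollification glued with boundary mollification after an inward local shift, as in \cite[Section 5.3]{evans}). Applying $S_\ve$ to the mass equation and to the momentum equation produces regularized equations with commutator error terms; the renormalized-solution property and $\na\sqrt{\n}\in L^\infty_t L^2_x$ together with $\n$ bounded should give, via a DiPerna–Lions / Yu-type commutator lemma (the time-commutator being the delicate ingredient, handled as in Corollary \ref{co}), that these commutators tend to zero in the appropriate $L^p_tL^q_x$ spaces as $\ve\to0$. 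I would then test the regularized momentum equation against $\phi_\delta^2 S_\ve u$ (or $\phi_\delta^2 S_\ve(\n u)/\!\!\sqrt{S_\ve\n}$-type quantities, choosing the combination that makes the vacuum harmless, exactly as in \cite{yu1}), integrate over $\om\times(0,t)$, and use the mass equation to rewrite the kinetic-energy term so that the standard cancellations occur.

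The core of the argument is the two-step limit. I would first send $\ve\to0$ with $\delta>0$ fixed. Because the mollification is global, the limit recovers the \emph{full} velocity $u$ (not a cut-off version), and one arrives at an approximate energy identity \eqref{m3} valid for each $\delta$: it contains the exact kinetic and potential energies multiplied by $\phi_\delta^2$, the full dissipation integrals weighted by $\phi_\delta^2$, plus boundary-layer remainder terms supported in the $\delta$-collar, which fall into two groups — inertial/pressure terms carrying a factor $\nabla\phi_\delta$ against products like $\n|u|^2 u$, $P(\n)u$, and diffusion terms carrying $\nabla\phi_\delta$ against $u\,\nabla u$. The inertial and pressure remainders are controlled using \eqref{1.5} ($u\in L^4_tL^6_x$ suffices, together with $\n$ bounded, to put $\n|u|^2u\in L^1$ and $P(\n)u\in L^1$ near the boundary, so these go to zero by absolute continuity of the integral as the collar shrinks). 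The genuinely new point is the diffusion remainder $\int_0^t\int_\om \mu\, (\nabla u\cdot\nabla\phi_\delta)\,\phi_\delta\, u\,dx\,ds$ and its $\div u$ analogue: here one writes $|\nabla\phi_\delta| \lesssim \delta^{-1}\mathbf{1}_{\{\mathrm{dist}(x,\partial\om)<\delta\}}$ and invokes the Hardy inequality of Lemma \ref{lemma}, which converts the factor $\delta^{-1}u$ into something controlled by $\nabla u$ in $L^2$ of the collar; combined with $\nabla u\in L^2_{t,x}$ and absolute continuity over the shrinking collar, this term vanishes as $\delta\to0$. Then sending $\delta\to0$ in \eqref{m3}, monotone/dominated convergence promotes the $\phi_\delta^2$-weighted energy and dissipation to the full quantities, and \eqref{energy} follows.

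There is one extra ingredient needed to close the initial-data term: after the limits, the kinetic energy at time $0$ must be shown to be $\tfrac12\int_\om\n_0|u_0|^2$ rather than merely $\le$, and here the hypothesis \eqref{1.5s} that $u_0\in L^{q_0}$ with $q_0>3$ enters — it guarantees $\n_0|u_0|^2\in L^1$ with the right continuity as $t\to 0^+$, and lets one identify the trace at $t=0$ of the regularized kinetic energy without loss; a short argument using the weak continuity of $\n u$ in time and strong $L^p_tL^q_x$ control of $u$ does this. Combining the lower-semicontinuity-free identity obtained above with the energy inequality \eqref{1.8} already in the definition then yields equality for every $t\in[0,T]$.

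The step I expect to be the main obstacle is making the global mollification near a merely $C^1$ boundary both well-defined and compatible with the commutator estimates: one must verify that the inward-shift-plus-mollify construction commutes with differentiation up to errors that still vanish in $L^p_tL^q_x$ under only \eqref{1.5a}–\eqref{1.5}, and in particular that the time-commutator for $\n u$ — the term distinguishing the compressible problem from the incompressible one — is controlled using $\n$ bounded and $\na\sqrt\n\in L^\infty_tL^2_x$ via the device of Corollary \ref{co}. Once that technical lemma is in place, the rest is a careful but routine bookkeeping of scale-transfer terms, with the Hardy inequality doing the essential work at the boundary.
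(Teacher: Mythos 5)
Your proposal follows essentially the same route as the paper: global mollification via an inward shift near the boundary glued by a partition of unity, an independent boundary cut-off $\phi_{\de}$, the commutator estimates of Corollary \ref{co} for $\p_t(\n u)$, the order of limits $\ep\to0$ first and $\de\to0$ second, the Hardy inequality to kill the diffusion boundary remainder, and the use of $u_0\in L^{q_0}$, $q_0>3$, to identify the trace at $t=0$. One step as you state it would not go through: the inertial and pressure remainders $\int \n u\cdot\na\phi_{\de}\,|u|^2$ and $\int u\cdot\na\phi_{\de}\,\n^{\ga}$ also carry the unbounded factor $\na\phi_{\de}\sim\de^{-1}$, so mere $L^1$ integrability of $\n|u|^2u$ and $\n^{\ga}u$ plus absolute continuity of the integral does not make them vanish; as in \eqref{m3q} you must apply the same Hardy-inequality device you reserve for the diffusion term, i.e.\ bound $\|u\,\na\phi_{\de}\|_{L^2}\le C\|u/\mathrm{dist}(x,\p\om)\|_{L^2}$ uniformly in $\de$ and let the complementary factor $\bigl(\int_0^T\int_{\{\mathrm{dist}(x,\p\om)<\de\}}|u|^4+\n^{2\ga}\bigr)^{1/2}$ tend to zero over the shrinking collar. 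With that correction the argument matches the paper's proof.
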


A few remarks are in order as follows.
\begin{remark}  Condition  \eqref{1.5} can be improved in the absence of the vacuum states to 
  \be\la{1.5r}\ba&   u\in L^{p}(0,T;L^{q}(\om)), \quad {\rm with}\quad \frac{2}{p}+\frac{3}{2q}\le \frac{3}{4},\quad q\ge6.\ea\ee
 In fact,  it follows from  \eqref{1.12}  and \eqref{1.5a}  that   $u\in L^{\infty}(0,T;L^{2}(\om)),$  which implies  by interpolation  
\bnn\ba \|u\|_{L^{4}(0,T;L^{6})}\le \|u\|_{L^{\infty}(0,T;L^{2})}^{\frac{(q-6)}{3(q-2)}} \|u\|_{L^{\frac{8q}{3(q-2)}}(0,T;L^{q})}^{\frac{2q}{3(q-2)}}\le C\|u\|_{L^{p}(0,T;L^{q})},\ea\enn
as long as $ \frac{8q}{3(q-2)}\le p.$
This is  condition  \eqref{1.5r}.   
\end{remark} 

\begin{remark} We will apply the same idea to treat the Leray--Hopf solution of the incompressible Navier--Stokes equations in a bounded domain in the Appendix. We are able to obtain an analogous regularity criterion as in the periodic case, with an additional condition on the control of the pressure on the boundary. This removes the extra Besov regularity assumption on the velocity as in \cite{yu2}.
\end{remark} 
  
 
\begin{remark}  The regularity assumption  \eqref{1.5a} on the density  is critical for making commutator estimates work, but  it is not optimal.    Alternatively,  it can be relaxed at the expense of  imposing extra time regularity on  velocity field.  This is similar to, for e.g., \cite{de2,fei1}.   
\end{remark}

The  rest  of this paper  is  organized as follows:   In Section \ref{sec_prep} we construct the global mollification, prove the commutator estimates in Sobolev spaces, and recall a classical Hardy-type inequality. In Section \ref{sec_proof} we give the proof for the main theorem. Finally in the Appendix we apply our method to the incompressible Navier--Stokes equations in a bounded domain and give sufficient regularity conditions for the energy equality. 
  \bigskip
  
\section{Preliminaries}\label{sec_prep}
 
 \subsection {Global approximation in   $\om$}\label{subsec_global}
If   $f\in L^{p}(0,T;W^{1,p}(\om))$,  the following  local    approximation is well known 
 \be\la{ws16}  f^{\ep}\rightarrow f\quad{\rm in}\quad  L^{p}_{loc}(0,T;W_{loc}^{1,p}(\om)),\quad\forall\,\,\, p\in [1,\infty),\ee
where  
 \be\la{ws16a} f^{\ep}(x,t)=\int_{0}^{T}\int_{\om}f(y,s)\eta_{\ep}(x-y,t-s)dyds,\quad\eta_{\ep}(x,t)=\frac{1}{\ep^{4}}\eta\left(\frac{x}{\ep},\frac{t}{\ep}\right),\ee 
with $\eta(x,t)$ being   the    standard    mollifier supported in a unit ball.

For the purpose of this paper,   we adopt some  ideas  in \cite[Section 5.3]{evans} and  build a global  approximation in  $L^{p}_{loc}(0,T;W^{1,p}(\om))$.

$(1)$  Since $\p\om\in C^{1},$   for a fixed  $x_{1}\in \p\om,$ there exist some $r_{1}>0$ and a $C^{1}$ function $h\,:\mathbb{R}^{2} \to \mathbb{R}$ such that, upon relabelling  the coordinate axis  if necessary,  we have
\bnn \om\cap B(x_{1},r_{1})=\{x\in B(x_{1},r_{1})\,:\,\,x_{3}>h(x_{1},x_{2})\},\enn where $B(x,r)$ is an open  ball which  centers  in $x$ with radius $r.$

Let  $  V_{1}=\om\cap B(x_{1},\frac{r_{1}}{2}).$ For a small $\ep<\frac{r_{1}}{8},$ we define the shifted point  
\begin{equation}\la{shifted}
x^\varepsilon : = x - \ep \vec{n}(x_1),
\end{equation}
then it is obvious   that   
\bnn B(x^\ep,\,\ep) \subset \om\cap B(x_{1},r_{1}),\quad \text{for all } \ x\in V_{1},\enn
 where $\vec{n}(x_1) $ is the unit outward normal vector of $\partial \om$ at $x_1$ (see Fig. \ref{globalapprox} below).  
\begin{figure}[h]
  \includegraphics[scale=1]{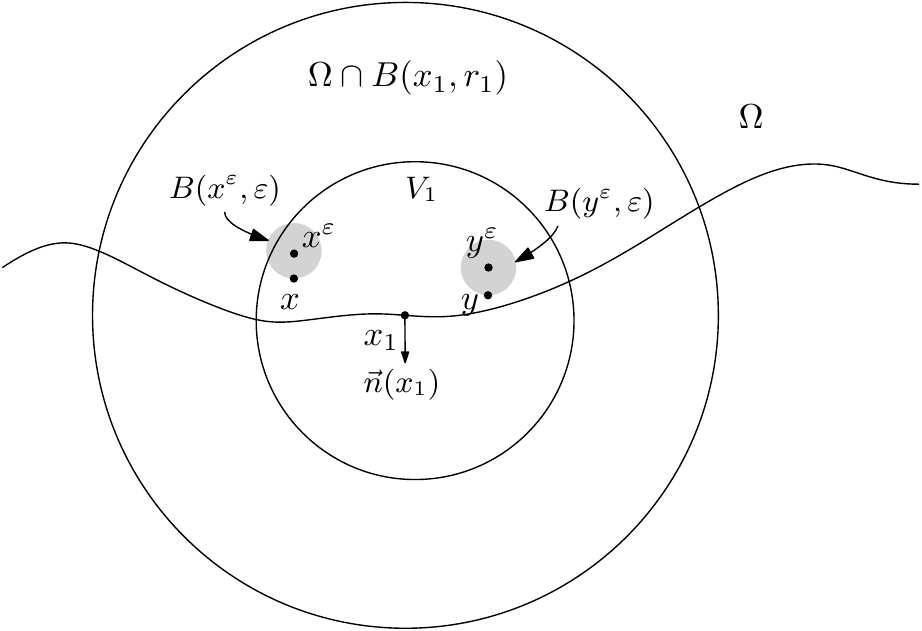}
  \caption{The local variable shift defined in $V_1$.}
  \label{globalapprox}
\end{figure}
 
 Define the shifted function 
 \be\la{li1} \ti{f}(x,t)=f(x^\ep,t),\quad x\in V_{1}.\ee 
 Then   there is room to   mollify $\ti{f}(x,t)$ like   \eqref{ws16a},     that is,   \be\la{ws3}\begin{split}   \ti{f}^{\ep}_{1}(x,t) & = \int_{0}^{T}\int_{\mathcal{V}_{1}}\ti{f}(y,s)\eta_{\ep}(x-y,t-s)dyds \\
 & = \int_{0}^{T}\int_{\mathcal{V}_{1}-\ep \vec{n}(x_1)} f(y,s)\eta_{\ep}(x^\ep-y,t-s)dyds,\end{split}\ee  for every $ (x,t)\in  V_{1}\times (\ep,T-\ep),$
and $\mathcal{V}_1$ can  be simply taken to be $B(x_1, \frac{r_1}{2} + 2\ep) \cap \om$.
  
We   claim that
\be\la{ws10} \lim_{\ep\rightarrow0}\|\ti{f}^{\ep}_{1}-f\|_{ L^{p}_{loc}(0,T;W^{1,p}(V_{1}))}=0.\ee
To confirm  this,    for any multi-index $\a$ satisfying $|\a|\le 1,$  
\bnn    \| \p_{x}^{\a}(\ti{f}^{\ep}_{1}-f)\|_{L^{p}_{loc}([0,T]\times V_{1})} \le \|  \p_{x}^{\a}(\ti{f}^{\ep}_{1}-\ti{f})\|_{L^{p}_{loc}([0,T]\times V_{1})}+\| \p_{x}^{\a}(\ti{f}-f)\|_{L^{p}_{loc}([0,T]\times V_{1})}.\enn
The second term on the right-hand side of the above goes to zero as $\ep\rightarrow0$ because the  translation is continuous in $L^{p}$, and the first term also vanishes as $\ep$ goes to zero   due to \eqref{ws16}.  
 
$(2)$ Since $\partial \om$ is compact, we find finitely many points $x_{i}\in \p\om$, radii $r_{i}>0$, corresponding sets $V_{i} = \om \cap B(x_i, \frac{r_i}{2})$ and functions $\ti{f}^{\ep}_{i}\in C^{\infty}\left(\overline{V_{i}}\right)$ ($i = 1,\ldots,k$) 
 such that   $\p \om \subset \bigcup_{i=1}^{k} \overline{V_{i}}$.   By  \eqref{ws10},    it follows that  
for a  given  $\de>0$, \be\la{ws17} \|\ti{f}^{\ep}_{i}-\ f\|_{L^{p}_{loc}(0,T;W^{1,p}(V_{i}))}<\frac{\de}{k+1}, \quad  \,\,\,i\in\{1,\cdot\cdot\cdot k\},\ee as  long as $\ep$ is taken  small.   
Take also an open set $V_{0}\subset\subset \om$ to satisfy 
 \be\la{ws18}\om\subset   \bigcup_{i=0}^{k}V_{i}\quad{\rm and}\quad \|f^{\ep}-\ f\|_{L^{p}_{loc}(0,T;W^{1,p}(V_{0}))}<\frac{\de}{k+1}.\ee 
   
We have the following   
\begin{proposition}[Global  mollification  in  $\om$] \la{po}
 Let $\{\xi_{i}\}_{i=0}^{k}$ be a smooth partition of unity subordinate to $V_{i}$,  that is,
\be\la{u2}\ba 0\le \xi_{i}\le 1,\quad   \xi_{i}\in C^{\infty}(V_{i}),\  {\rm{supp }}\ \xi_{i} \subset V_{i},\quad \sum_{i=0}^{k}\xi_{i}=1\,\,{\rm in}\,\, \om.\ea\ee
 Define 
\be\la{ws15} [f]^{\ep}(x,t) :=\xi_{0}(x)f^{\ep}(x,t)+\sum_{i=1}^{k}\xi_{i}(x)\ti{f}_{i}^{\ep}(x,t),\quad \forall\,\,x\in \om,\ee
where $f^{\ep}(x,t)$ and $\ti{f}_{i}^{\ep}(x,t)$  are defined in \eqref{ws16} and \eqref{ws3} respectively. 

Then, sending $\ep\rightarrow0,$  we have  \be\la{p9a}   [f]^{\ep}\rightarrow f  \quad {\rm in}\,\,\,L^{p}_{loc}(0,T;W^{1,p}(\om)).\ee
Moreover,  \be\la{p9} [f]^{\ep}-f^{\ep}\rightarrow 0\quad {\rm in}\,\,\,L^{p}_{loc}(0,T;W^{1,p}(V_{0}))\quad{\rm and }\quad [f]^{\ep}-\ti{f}_{i}^{\ep}\rightarrow 0\quad {\rm in}\,\,\,L^{p}_{loc}(0,T;W^{1,p}(V_{i})).\ee
\end{proposition}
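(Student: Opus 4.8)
The statement collects together three convergence assertions about the glued-together mollification $[f]^\ep$. All of them will follow by the triangle inequality once we know that each local piece approximates $f$ well on its own patch. So the plan is: first establish \eqref{p9a} directly from the local bounds \eqref{ws17}--\eqref{ws18} using the partition of unity, then derive \eqref{p9} as an easy consequence.

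\textit{Step 1: the global convergence \eqref{p9a}.} Fix an open set $\om'\subset\subset\om$ and a time interval $[a,b]\subset(0,T)$; I must show $\|[f]^\ep-f\|_{L^p(a,b;W^{1,p}(\om'))}\to0$. Writing $f=\sum_{i=0}^k\xi_i f$ using \eqref{u2}, we have the pointwise identity
\be
[f]^\ep-f=\xi_0(f^\ep-f)+\sum_{i=1}^k\xi_i(\ti f_i^\ep-f).
\ee
Since each $\xi_i\in C^\infty$ with $\mathrm{supp}\,\xi_i\subset V_i$, and the product rule gives $\na(\xi_i g)=\xi_i\na g+(\na\xi_i)g$ with $\|\xi_i\|_{W^{1,\infty}}$ bounded by a constant independent of $\ep$, each term is controlled: $\|\xi_0(f^\ep-f)\|_{W^{1,p}(\om')}\le C\|f^\ep-f\|_{W^{1,p}(V_0\cap\om')}$ and similarly $\|\xi_i(\ti f_i^\ep-f)\|_{W^{1,p}(\om')}\le C\|\ti f_i^\ep-f\|_{W^{1,p}(V_i\cap\om')}$. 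Raising to the $p$-th power, integrating in time, and using \eqref{ws17}--\eqref{ws18} (taking $\ep$ small, and absorbing the fixed constant $C$ from the partition of unity into the choice of how small $\de$ is), the sum of $k+1$ terms is bounded by $C\de$. Since $\de>0$ is arbitrary, \eqref{p9a} follows. One subtle point worth spelling out: the localization sets $V_0,\dots,V_k$ and the radii $r_i$ in the covering were fixed \emph{first} (depending only on $\om$, not on $\de$ or $\ep$); only then are $\de$ and $\ep$ chosen. This ordering is what makes the argument legitimate, and it is already built into the construction in \eqref{ws17}--\eqref{ws18}.

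\textit{Step 2: the local comparisons \eqref{p9}.} On $V_0$ we have $[f]^\ep-f^\ep=\xi_0 f^\ep+\sum_{i\ge1}\xi_i\ti f_i^\ep-f^\ep=(\xi_0-1)f^\ep+\sum_{i\ge1}\xi_i\ti f_i^\ep=\sum_{i\ge1}\xi_i(\ti f_i^\ep-f^\ep)$, using $\sum_i\xi_i=1$. Now add and subtract $f$ inside each parenthesis: $\ti f_i^\ep-f^\ep=(\ti f_i^\ep-f)+(f-f^\ep)$, and apply \eqref{ws17} together with \eqref{ws18} (on the overlap $V_0\cap V_i$) plus the $W^{1,\infty}$-bound on $\xi_i$, exactly as in Step 1; every term tends to $0$ in $L^p_{loc}(0,T;W^{1,p}(V_0))$. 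The argument for $[f]^\ep-\ti f_i^\ep$ on $V_i$ is identical: subtract $\ti f_i^\ep$ from \eqref{ws15}, use $\sum_j\xi_j=1$ to write the difference as a sum of $\xi_j$ times $(\ti f_j^\ep-\ti f_i^\ep)$ (with $f^\ep-\ti f_i^\ep$ for the $j=0$ term), insert $\pm f$, and invoke \eqref{ws10}/\eqref{ws17}/\eqref{ws18} on the relevant overlaps.

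\textit{Main obstacle.} There is no deep obstacle here — the proposition is essentially a bookkeeping exercise in the standard ``straighten the boundary, shift inward, mollify, glue'' construction, and the real analytic content (that the inward shift creates room and that translation is $L^p$-continuous) has already been discharged in \eqref{ws10}. The only place to be careful is making sure the constant coming from differentiating the partition of unity is genuinely independent of $\ep$ (it is, since $\{\xi_i\}$ is fixed once and for all) so that it can be absorbed, and making sure that the local estimates \eqref{ws17}--\eqref{ws18} are applied on the correct overlapping subdomains $V_i\cap V_j$ rather than on all of $\om$. Both points are routine but should be stated explicitly to keep the double-limit argument in Section~\ref{sec_proof} clean.
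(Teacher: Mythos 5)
Your proof is correct and follows essentially the same route as the paper: the decomposition $[f]^\ep-f=\xi_0(f^\ep-f)+\sum_{i=1}^k\xi_i(\ti f_i^\ep-f)$ combined with \eqref{ws17}--\eqref{ws18} and the $W^{1,\infty}$ bound on the fixed partition of unity is exactly the paper's argument for \eqref{p9a}, and your Step 2 is just a more explicit spelling-out of the paper's one-line triangle-inequality deduction of \eqref{p9} from \eqref{p9a}, \eqref{ws16} and \eqref{ws10}. No gaps.
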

\begin{proof}.  
It follows from  \eqref{ws17},  \eqref{ws18}, \eqref{u2} that for any multi-index $\a$ satisfying $|\a|\le 1,$ 
\bnn\ba &\|\p_{x}^{\a}([f]^{\ep} -f)\|_{L^{p}_{loc}(0,T;L^{p}(\om))} \\
&\le \|\p_{x}^{\a}(\xi_{0}f^{\ep}- \xi_{0}f)\|_{L^{p}_{loc}(0,T;L^{p}(V_{0}))}+ \sum_{i=1}^{k}\|\p_{x}^{\a}(\xi_{i}\ti{f}_{i}^{\ep} - \xi_{i}f)\|_{L^{p}_{loc}(0,T;L^{p}(V_{i}))}\\
&\le C \|f^{\ep}-f\|_{L^{p}_{loc}(0,T;W^{1,p}(V_{0}))}+C\sum_{i=1}^{k}\|\ti{f}_{i}^{\ep}-f\|_{L^{p}_{loc}(0,T;W^{1,p}(V_{i}))}\le C\de. \ea\enn    
  This proves \eqref{p9a}.  

As a  direct result of \eqref{p9a}, \eqref{ws16} and \eqref{ws10},  we obtain  \eqref{p9}.    
\end{proof}

\subsection{Commutator estimates}
\begin{lemma}[Lemma 2.3 in \cite{lion1}] \la{lem} Suppose  $\n\in W^{1,r_{1}}([0,T]\times \om),\,\,\, u\in L^{r_{2}}([0,T]\times \om)$, and  $1\le r,\, r_{1},\,r_{2}\le \infty,\,\,  \frac{1}{r_{1}}+\frac{1}{r_{2}}=\frac{1}{r}.$  Then, 
  
\be\la{p5}\|\p(\n u)^{\ep}-\p(\n u^{\ep})\|_{L^{r}_{loc}([0,T]\times \om)}\le \|u\|_{L^{r_{2}}([0,T]\times \om)}\|\p\n\|_{L^{r_{1}}([0,T]\times \om)},\ee
  where  $\p=\p_{t}$ or $\p=\p_{x}$, and $f^{\ep}$ is defined  as in  \eqref{ws16}.     Furthermore, 
   \be\la{6} \p(\n u)^{\ep}-\p(\n u^{\ep})\rightarrow 0\,\,\,{\rm in}\,\,\,L^{\underline{r}}_{loc}([0,T]\times \om),\quad {\rm as}\,\,\,\ep\rightarrow0,\ee
   where  $\underline{r}=r$ if $r_{2}<\infty$  and   $\underline{r}<r$ if $r_{2}=\infty.$ 
 \end{lemma}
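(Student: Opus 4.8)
Since this is the classical DiPerna--Lions commutator estimate, the natural route is the standard mollifier cancellation argument adapted to the space--time mollifier $\eta_\ep$. Write $z=(x,t)\in\R^4$ and let $\partial_j$ denote whichever of the four coordinate derivatives is under consideration, so that $\partial_j(\n u)^\ep=(\n u)*\partial_j\eta_\ep$ and $\partial_j u^\ep=u*\partial_j\eta_\ep$. Expanding the convolutions and using $\partial_j(\n u^\ep)=(\partial_j\n)u^\ep+\n\,\partial_j u^\ep$, one obtains, on compact subsets of $(0,T)\times\om$, the pointwise identity
\[
\partial_j(\n u)^\ep(z)-\partial_j(\n u^\ep)(z)=\int\big(\n(z-y)-\n(z)\big)u(z-y)(\partial_j\eta_\ep)(y)\,dy-(\partial_j\n)(z)\,u^\ep(z).
\]
The last term is harmless: by Hölder with $\tfrac1r=\tfrac1{r_1}+\tfrac1{r_2}$ and the fact that mollification does not increase $L^{r_2}$ norms, $\|(\partial_j\n)u^\ep\|_{L^r_{loc}}\le\|\partial_j\n\|_{L^{r_1}}\|u\|_{L^{r_2}}$. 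So the plan is: (i) bound the integral term uniformly in $\ep$ to get \eqref{p5}; (ii) upgrade to the strong convergence \eqref{6} by a density argument.

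For (i) — the heart of the matter — the naive estimate of the integral term costs $\|\partial_j\eta_\ep\|_{L^1}\sim\ep^{-1}$ and is useless, so the cancellation $\int\partial_j\eta_\ep=0$ must be exploited. Writing $\n(z-y)-\n(z)=-\int_0^1\na\n(z-sy)\cdot y\,ds$ (legitimate because $\n\in W^{1,r_1}$) and substituting $y=\ep w$, the factor $y=\ep w$ cancels the $\ep^{-1}$ sitting in $(\partial_j\eta_\ep)(y)\,dy=\ep^{-1}(\partial_j\eta)(w)\,dw$, so the integral term becomes
\[
-\int_0^1\!\!\int\big(\na\n(z-s\ep w)\cdot w\big)\,u(z-\ep w)\,(\partial_j\eta)(w)\,dw\,ds .
\]
Minkowski's integral inequality, Hölder with $\tfrac1r=\tfrac1{r_1}+\tfrac1{r_2}$, and translation invariance of the Lebesgue norms then bound its $L^r_{loc}$ norm by $\big(\int|w|\,|\partial_j\eta(w)|\,dw\big)\,\|\na\n\|_{L^{r_1}}\|u\|_{L^{r_2}}$, which together with the bound on the last term gives \eqref{p5} (with a constant depending only on $\eta$, absorbed into its normalization). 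I expect this to be the only real obstacle: one must notice that the singular kernel $\partial_j\eta_\ep$ is paired with the \emph{increment} $\n(z-y)-\n(z)$, which supplies a compensating factor of size $|y|\lesssim\ep$.

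For (ii), first take $u$ smooth: then $\n u\in W^{1,r_1}_{loc}\subset W^{1,r}_{loc}$ (recall $r\le r_1$), so $\partial_j(\n u)^\ep\to\partial_j(\n u)$ in $L^r_{loc}$, while $\partial_j(\n u^\ep)=(\partial_j\n)u^\ep+\n\,\partial_j u^\ep\to(\partial_j\n)u+\n\,\partial_j u=\partial_j(\n u)$ in $L^r_{loc}$ since $u^\ep,\partial_j u^\ep$ converge locally uniformly and $\n,\partial_j\n\in L^{r_1}_{loc}$; hence the commutator tends to $0$ in $L^r_{loc}$. For general $u\in L^{r_2}$ with $r_2<\infty$, approximate $u$ in $L^{r_2}$ by smooth functions and conclude by a three-$\ep$ argument using the uniform bound from (i), which yields \eqref{6} with $\underline r=r$. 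When $r_2=\infty$, smooth functions are dense only in $L^q_{loc}$ for finite $q$; running the same argument with such a $q$ produces convergence in $L^{\underline r}_{loc}$ with $\tfrac1{\underline r}=\tfrac1{r_1}+\tfrac1q>\tfrac1r$, i.e. $\underline r<r$, exactly the stated loss.
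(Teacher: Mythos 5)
The paper does not actually prove this lemma---it is quoted from Lemma 2.3 of Lions' book---and your argument is precisely the classical difference-quotient/mollifier-cancellation proof of that result, so it is correct and consistent with the cited source. The only (harmless) discrepancy is that your Taylor expansion of $\rho(z-y)-\rho(z)$ naturally yields the full space--time gradient $C(\eta)\,\|\nabla_{x,t}\rho\|_{L^{r_1}}\|u\|_{L^{r_2}}$ rather than the single derivative $\|\partial\rho\|_{L^{r_1}}$ with constant one appearing in \eqref{p5}; this is the honest form of the bound and is what the paper in fact uses (cf.\ the appearance of both $\|\partial_t\rho\|_{L^{r_1}}$ and $\|\nabla\rho\|_{L^{r_1}}$ in \eqref{p1}).
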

  
We will  need   the following     variant of Lemma \ref{lem}.
  \begin{corollary} \la{co}   
Under  the same  assumptions listed  in Lemma \ref{lem}, we have  
 \be \la{p1} \|\p_{t}\left((\ti{\n}\ti{u})_{i}^{\ep}-\n \ti{u}_{i}^{\ep}\right)\|_{L^{r}_{loc}(0,T;L^{r}(V_{i}))}\le C\|u\|_{L^{r_{2}}([0,T]\times \om)}\left(\|\p_{t}\n\|_{L^{r_{1}}([0,T]\times \om)}+\|\na \n\|_{L^{r_{1}}([0,T]\times \om)}\right)\ee  
   and \be\la{p2}\|\p_{x}\left((\ti{\n}\ti{u})_{i}^{\ep}-\n \ti{u}_{i}^{\ep}\right)\|_{L^{r}_{loc}(0,T;L^{r}(V_{i}))}\le C\|u\|_{L^{r_{2}}([0,T]\times \om)}\|\na \n\|_{L^{r_{1}}([0,T]\times \om)},\ee where $V_{i}\,(i=1,\cdot\cdot\cdot k)$ is the same as mentioned earlier,  and $\ti{f}_{i}^{\ep}$ is defined  in \eqref{ws3}. 
Furthermore,   
   \be\la{p3}\p\left((\ti{\n}\ti{u})_{i}^{\ep}-\n \ti{u}_{i}^{\ep}\right)\rightarrow 0\,\,\,{\rm in}\,\,\,L^{\underline{r}}_{loc}(0,T;L^{\underline{r}}(V_{i}))\quad {\rm as}\,\,\,\ep\rightarrow0,\ee
 where $\underline{r}$ is given as in Lemma \ref{lem}.
 \end{corollary}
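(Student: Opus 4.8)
The plan is to reduce Corollary \ref{co} to Lemma \ref{lem} by exploiting the fact that the shifted mollification $\ti f_i^\ep$ defined in \eqref{ws3} is, up to the shift $x \mapsto x^\ep$, an ordinary mollification. The key observation is that differentiating \eqref{ws3} produces two types of terms: those where the derivative lands on the mollifier $\eta_\ep$ evaluated at $x^\ep - y$, which behave exactly as in Lemma \ref{lem} since $\na_x(x^\ep - y) = I$ (the shift $-\ep\vec n(x_1)$ is a constant translation, as $x_1$ is fixed), and no extra terms at all, because the shift is rigid. So first I would write out, for $\p = \p_x$,
\be\la{plan1}
\p_x\big((\ti\n\ti u)_i^\ep - \n\,\ti u_i^\ep\big)(x,t) = \Big[\p_x(\n u)^\ep - \p_x(\n\, u^\ep)\Big](x^\ep,t) + \Big(\n(x^\ep,t) - \n(x,t)\Big)\,\p_x u_i^\ep(x,t),
\ee
where the first bracket is precisely the commutator from \eqref{p5} evaluated at the shifted point, and the second term is the genuinely new contribution arising because in $\n\,\ti u_i^\ep$ the density is evaluated at $x$ while inside $(\ti\n\ti u)_i^\ep$ it is evaluated at $x^\ep$.

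For \eqref{p2}, I would bound the first term on the right of \eqref{plan1} by composing the $L^r$ estimate \eqref{p5} with the change of variables $x \mapsto x^\ep$ (a translation, hence an $L^r$-isometry on the relevant local sets, for $\ep$ small enough that $B(x^\ep,\ep) \subset \om \cap B(x_1,r_1)$ as guaranteed in the construction), giving $\|u\|_{L^{r_2}}\|\na\n\|_{L^{r_1}}$ up to a constant. For the second term, $\|\n(x^\ep,\cdot) - \n(x,\cdot)\|_{L^\infty_{x,t}} \le \ep\,\|\na\n\|_{L^\infty}$ is not available at the $L^{r_1}$ level, so instead I would estimate $\p_x u_i^\ep$ in $L^\infty$ via Young's inequality, $\|\p_x u_i^\ep\|_{L^{r_2}} \le C\ep^{-1}\|u\|_{L^{r_2}}$, and pair it with $\|\n(x^\ep,t) - \n(x,t)\|_{L^{r_1}} \le \ep\|\na\n\|_{L^{r_1}}$ (by the fundamental theorem of calculus along the segment from $x$ to $x^\ep$, valid since $\n \in W^{1,r_1}$); the factors of $\ep$ cancel and one obtains the bound $C\|u\|_{L^{r_2}}\|\na\n\|_{L^{r_1}}$ claimed in \eqref{p2}. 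The time-derivative estimate \eqref{p1} is analogous: $\p_t$ of \eqref{ws3} again splits as a shifted copy of $\p_t(\n u)^\ep - \p_t(\n u^\ep)$ plus $(\p_t\n(x^\ep,t) - \p_t\n(x,t))\,u_i^\ep$ — wait, more carefully, the new term involves $(\n(x^\ep,t)-\n(x,t))\p_t u_i^\ep$, which is handled as before but now with the $\p_t$ mollifier costing $\ep^{-1}$, again cancelling the $\ep$ from the density difference, so that both $\|\p_t\n\|_{L^{r_1}}$ (from the Lemma \ref{lem} piece) and $\|\na\n\|_{L^{r_1}}$ (from the new piece) appear on the right, matching \eqref{p1}.

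Finally, for the convergence \eqref{p3}: the shifted copy of the Lemma \ref{lem} commutator tends to zero in $L^{\underline r}_{loc}$ by \eqref{6} composed with the translation, while the new term $(\n(x^\ep,t)-\n(x,t))\p u_i^\ep$ is controlled in $L^r$ uniformly (as just shown) by a constant independent of $\ep$, and in fact tends to zero: on a fixed compact subset one has $\|\n(x^\ep,\cdot)-\n(x,\cdot)\|_{L^{r_1}} \to 0$ by $L^{r_1}$-continuity of translations, but $\|\p u_i^\ep\|_{L^{r_2}}$ may blow up like $\ep^{-1}$ when $r_2 = \infty$, so I would instead argue by the $\ep$-cancellation: write the term as $\ep^{-1}(\n(x^\ep,t)-\n(x,t))\cdot \ep\,\p u_i^\ep$ where $\ep\,\p u_i^\ep$ is bounded in $L^{r_2}$ and $\ep^{-1}(\n(x^\ep,\cdot)-\n(x,\cdot)) \to \na\n(x,\cdot)\cdot(-\vec n(x_1))$ but only weakly in general — so cleanest is to pass to a subsequence and use that the product is bounded in $L^r$ with the $\ep^{-1}$-difference converging strongly to a fixed $L^{r_1}$ function while $\ep\p u_i^\ep \to 0$ in $L^{r_2}$ weak-$*$; alternatively, and most simply, I would just invoke that \eqref{p2}--\eqref{p1} hold with constants uniform in $\ep$ together with density of smooth functions to upgrade to strong convergence in $L^{\underline r}$ for $\underline r < r$, exactly as in the passage from \eqref{p5} to \eqref{6} in Lemma \ref{lem}. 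The main obstacle I anticipate is bookkeeping the $\ep^{-1}$ loss in the mollified derivative against the $\ep$ gain from the density increment so that the estimates close with no residual power of $\ep$, and making sure the change of variables $x \mapsto x^\ep$ is applied on the correct local set $V_i$ so that all integrals stay inside $\om$; once that is set up, everything reduces to Lemma \ref{lem}.
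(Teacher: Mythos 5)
Your strategy is essentially the paper's: you split $(\ti{\n}\ti{u})_{i}^{\ep}-\n\,\ti{u}_{i}^{\ep}$ into the Lemma \ref{lem} commutator evaluated at the shifted point $x^{\ep}$ plus the density increment $(\n(x^{\ep},t)-\n(x,t))\,\ti{u}_{i}^{\ep}$, and you close the estimate by playing the $O(\ep)$ bound on that increment (fundamental theorem of calculus along the segment from $x$ to $x^{\ep}$, using $\na\n\in L^{r_{1}}$) against the $O(\ep^{-1})$ cost of putting the derivative on the mollifier; the convergence \eqref{p3} is then obtained by a density argument, exactly as in the paper. The one correction needed is in your displayed identity: the product rule also produces the term $\bigl(\p\n(x^{\ep},t)-\p\n(x,t)\bigr)\ti{u}_{i}^{\ep}$, in which the derivative falls on the density difference rather than on the mollified velocity; you omit it in the spatial case and talk yourself out of it in the temporal case. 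This term is harmless --- by Young and H\"older it is bounded by $C\|u\|_{L^{r_{2}}}\|\p\n\|_{L^{r_{1}}}$ and so is absorbed into the right-hand sides of \eqref{p1} and \eqref{p2} --- but it must be retained; it is exactly the middle term in the paper's decomposition \eqref{b3}. With that term restored, your argument reproduces the paper's proof.
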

\begin{proof} 
The proof is the same spirit of Lemma \ref{lem}.  By \eqref{p5},    
\be\la{b3}\ba \|&\p_{t}((\ti{\n} \ti{u})_{i}^{\ep}- \n \ti{u}_{i}^{\ep})\|_{L^{r}_{loc}(0,T;L^{r}(V_{i}))}\\
  &\le \|\p_{t}((\ti{\n} \ti{u})_{i}^{\ep}-\ti{\n} \ti{u}_{i}^{\ep})\|_{L^{r}_{loc}(0,T;L^{r}(V_{i}))}+\|\p_{t}(\ti{\n} -\n)\ti{u}_{i}^{\ep}\|_{L^{r}_{loc}(0,T;L^{r}(V_{i}))} \\
  & \quad +\|(\ti{\n} -\n)\p_{t}\ti{u}_{i}^{\ep}\|_{L^{r}_{loc}(0,T;L^{r}(V_{i}))}\\
  &\le C\|u\|_{L^{r_{2}}([0,T]\times \om)} \|\p_{t}\n\|_{L^{r_{1}}([0,T]\times \om)} +\|(\ti{\n} -\n)\p_{t}\ti{u}_{i}^{\ep}\|_{L^{r}_{loc}(0,T;L^{r}(V_{i}))}.\ea\ee
Thus, to prove \eqref{p1},  it suffices to estimate the last term in \eqref{b3}. Since 
\bnn \ba(\ti{\n} -\n)\p_{t}\ti{u}_{i}^{\ep}
&=  (\n(x^\ep,t)-\n(x,t)) \int \int u(y,s)\p_{t}\eta_{\ep}(x^\ep-y,t-s)dyds\\
&\le C\frac{|\n(x^\ep,t)-\n(x,t)|}{\ep}\int_{(t-\ep,t+\ep)} \int_{B(x^\ep,\ep)} \frac{|u(y,s)|}{\ep^{4}} dyds\\
&\le C\left(\int_{0}^{1}|\na \n(x+\tau \ep e_{3},t)| d\tau\right) (|u|*\ti{J}_{\ep})\ea\enn
with $\ti{J}_{\ep}(x,t)=\frac{1}{\ep^{4}} \textbf{1}_{B(0,\ep)}(x^\ep,t),$
then,
  \be\la{p4}\ba \|(\ti{\n} -\n)\p_{t}\ti{u}_{i}^{\ep}\|_{L^{r}_{loc}(0,T;L^{r}(V_{i}))}
  & \le    C\||u|*J_{\ep}\|_{L^{r_{2}}_{loc}(0,T;L^{r_{2}}(V_{i}))}\|\na \n\|_{L^{r_{1}}(V_{i}\times(0,T))} \\
  &\le  C\|u\|_{L^{r_{2}}([0,T]\times \om)} \|\na \n\|_{L^{r_{1}}([0,T]\times \om)}.\ea\ee 
Therefore,  combining  \eqref{b3} with \eqref{p4},  we get   \eqref{p1}. 

The  argument for  \eqref{p2}  goes  similarly, and 
 \eqref{p3}  follows from \eqref{p1},   \eqref{p2} by a density arguments.
 \end{proof}
  
\begin{lemma}\la{lemma2}  Let  $1\le r,\,  r_{1},\,r_{2}<\infty,\,\,  \frac{1}{r_{1}}+\frac{1}{r_{2}}=\frac{1}{r}$,  and   $f\in L^{r_{1}},\,g\in L^{r_{2}}$.  Then 
\be\la{q1} (fg)^{\ep}-fg^{\ep} \rightarrow 0\quad {\rm in}\quad L^{r}_{loc}([0,T]\times \om)\quad{\rm as}\,\,\,\ep\rightarrow0.\ee
\end{lemma}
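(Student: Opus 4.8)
The plan is to sidestep the Friedrichs-type density argument that underlies Lemma \ref{lem} and Corollary \ref{co}, and to exploit the fact that in \eqref{q1} the factor $f$ is never differentiated. First I would add and subtract $fg$ to rewrite the commutator as
\be\la{q2}
(fg)^{\ep}-fg^{\ep}=\big((fg)^{\ep}-fg\big)-f\big(g^{\ep}-g\big),
\ee
so that it is enough to show that each of the two brackets on the right tends to $0$ in $L^{r}_{loc}([0,T]\times\om)$ as $\ep\rightarrow0$.

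For the first bracket, H\"older's inequality with $\frac{1}{r_{1}}+\frac{1}{r_{2}}=\frac{1}{r}$ gives $fg\in L^{r}$ with $\|fg\|_{L^{r}}\le\|f\|_{L^{r_{1}}}\|g\|_{L^{r_{2}}}$; since $r<\infty$, the standard $L^{r}$-approximation property of the space--time mollifier $\eta_{\ep}$ then yields $(fg)^{\ep}\rightarrow fg$ in $L^{r}_{loc}([0,T]\times\om)$. For the second bracket, since $r_{2}<\infty$ one likewise has $g^{\ep}\rightarrow g$ in $L^{r_{2}}_{loc}([0,T]\times\om)$, and hence, on any compact set $Q\subset\subset\om\times(0,T)$, H\"older's inequality gives
\be\la{q3}
\|f(g^{\ep}-g)\|_{L^{r}(Q)}\le\|f\|_{L^{r_{1}}(Q)}\,\|g^{\ep}-g\|_{L^{r_{2}}(Q)}\rightarrow0\quad{\rm as}\,\,\,\ep\rightarrow0.
\ee
Plugging these two convergences into \eqref{q2} proves \eqref{q1}.

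The only step needing a word of care — and it is the same caveat as elsewhere in Section \ref{sec_prep} — is that the operation $h\mapsto h^{\ep}$ in \eqref{ws16a} integrates over $[0,T]\times\om$, not over $\R^{4}$. This causes no difficulty: on a compact set $Q\subset\subset\om\times(0,T)$ and for $\ep$ smaller than the distance from $Q$ to $\p(\om\times(0,T))$, the $\ep$-ball about each point of $Q$ lies inside $\om\times(0,T)$, so $h^{\ep}$ agrees on $Q$ with the genuine convolution of the zero extension of $h$ against $\eta_{\ep}$, and the classical mollifier estimates apply verbatim; values of $t$ near $\{0,T\}$, if one wishes to keep them in the notation $L^{r}_{loc}([0,T]\times\om)$, are handled in the same way with one-sided mollifiers. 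Thus I do not expect a genuine obstacle: the entire content of the lemma is the algebraic identity \eqref{q2} combined with H\"older's inequality and the elementary fact that $h^{\ep}\rightarrow h$ in $L^{p}_{loc}$ for every $p<\infty$.
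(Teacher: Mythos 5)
Your proof is correct, but it takes a genuinely different route from the paper's. You split the commutator as $(fg)^{\ep}-fg^{\ep}=\big((fg)^{\ep}-fg\big)-f\big(g^{\ep}-g\big)$ and reduce everything to the standard fact that $h^{\ep}\rightarrow h$ in $L^{p}_{loc}$ for $p<\infty$ (applied once with $h=fg\in L^{r}$ and once with $h=g\in L^{r_{2}}$), together with H\"older's inequality. The paper instead works directly on the commutator integrand: it writes $(fg)^{\ep}-fg^{\ep}=\int\!\!\int\big(f(y,s)-f(x,t)\big)g(y,s)\,\eta_{\ep}(x-y,t-s)\,dyds$, applies H\"older inside the convolution to bound it pointwise by an $\ep$-average of $|f(x,t)-f(y,s)|^{r_{1}}$ times $\big(|g|^{r_{2}}*J_{\ep}\big)^{1/r_{2}}$, invokes the Lebesgue Differentiation Theorem to send the first factor to zero almost everywhere, and concludes by dominated convergence using the uniform bound $\|(fg)^{\ep}-fg^{\ep}\|_{L^{r}}\le\|f\|_{L^{r_{1}}}\|g\|_{L^{r_{2}}}$. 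Your argument is the more elementary of the two: it avoids the a.e.\ pointwise step and the dominated convergence step entirely, and it makes transparent that $f$ is never differentiated (indeed you never use $r_{1}<\infty$). What the paper's direct estimate buys is uniformity with the other commutator lemmas of Section~2: the same template of H\"older-inside-the-mollifier plus a difference $f(x,t)-f(y,s)$ is exactly what drives Lemma~\ref{lem} and Corollary~\ref{co}, where a quantitative bound in terms of $\|\p\n\|_{L^{r_{1}}}$ is needed and a soft splitting like yours would not suffice. Your caveat about the mollification being taken over $[0,T]\times\om$ rather than $\R^{4}$ is handled appropriately by restricting to compact subsets, which is all the $L^{r}_{loc}$ conclusion requires.
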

\begin{proof} The   H\"older inequality gives 
\bnn\ba \  \left| (f g)^{\ep}-fg^{\ep} \right| &= \left| \int\int (f(y,s)-f(x,t))g(y,s)\eta_{\ep}(x-y,t-s)dyds \right| \\
&\le \left(\frac{1}{\ep^{4}}\int_{t-\ep}^{t+\ep}\int_{B(x,\ep)}|f(x,t)-f(y,s)|^{r_{1}}\right)^{\frac{1}{r_{1}}}\left(\frac{1}{\ep^{4}}\int_{t-\ep}^{t+\ep}\int_{B(x,\ep)}|g|^{r_{2}}\right)^{\frac{1}{r_{2}}}\\
&\le  \left(\frac{1}{\ep^{4}}\int_{B(x,\ep)}|f(x,t)-f(y,s)|^{r_{1}} \right)^{\frac{1}{r_{1}}}\left(  |g|^{r_{2}}*J_{\ep}\right)^{\frac{1}{r_{2}}},\ea\enn
  with $ J_{\ep}(x,t)=\frac{1}{\ep^{4}}\textbf{1}_{B(0,\ep)}(x,t).$  
 The Lebesgue  Differentiation  Theorem implies that 
\bnn\frac{1}{\ep^{4}}\int_{t-\ep}^{t+\ep}\int_{B(x,\ep)}|f(x,t)-f(y,s)|^{r_{1}} \rightarrow0,\quad {\rm as}\,\,\ep\rightarrow0.\enn  
Noticing that   \bnn\|(fg)^{\ep}-fg^{\ep}\|_{L^{r}_{loc}(\om\times (0,T))}\le \|f\|_{L^{r_{1}}}   \|g\|_{L^{r_{2}}}\le C, 
\enn
    we  obtain \eqref{q1} by   the Dominated Convergence Theorem. 
\end{proof}
   
The following Hardy-type imbedding will be useful for later use.
\begin{lemma}[\cite{ku}]\la{lemma}  Let $p\in[1,\infty)$ and $f\in W_{0}^{1,p}(\om)$.   There is a  constant C which  depends on $p$ and $\om$, such that  
\bnn \left\| \frac{f(x)}{dist (x,\p\om)} \right\|_{ L^{p}(\om)}\le C\|f\|_{W_{0}^{1,p}(\om)},\enn   \end{lemma}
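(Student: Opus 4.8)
The statement is the classical Hardy inequality on a bounded domain, and I would prove it by reducing to the one-dimensional Hardy inequality via boundary flattening. By the density of $C_{c}^{\infty}(\om)$ in $W_{0}^{1,p}(\om)$, it suffices to establish the bound for $f\in C_{c}^{\infty}(\om)$ and then pass to the limit, since the left-hand functional is continuous under $W_{0}^{1,p}$ convergence once the uniform bound is known. Write $d(x)=\mathrm{dist}(x,\p\om)$ and fix a small $\de_{0}>0$, depending only on $\om$, so that the distance function is well behaved on the collar $\om_{\de_{0}}=\{x\in\om:\,d(x)<\de_{0}\}$. On the interior piece $\{x\in\om:\,d(x)\ge \de_{0}\}$ one has the trivial bound $|f(x)/d(x)|\le \de_{0}^{-1}|f(x)|$, so that part of the integral is controlled by $\de_{0}^{-p}\|f\|_{L^{p}(\om)}^{p}$. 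The entire content is therefore the estimate of $\int_{\om_{\de_{0}}}|f/d|^{p}\,dx$ near the boundary.

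Next I would localize and flatten. Since $\p\om$ is compact and $C^{1}$, cover it by finitely many balls $B_{j}$ in each of which, after relabelling the axes, $\om\cap B_{j}=\{x_{3}>h_{j}(x_{1},x_{2})\}$ for some $C^{1}$ function $h_{j}$, exactly as in the construction of Section \ref{subsec_global}. Choose a partition of unity $\{\zeta_{j}\}$ subordinate to this cover of $\om_{\de_{0}}$ and change variables in each chart by $y=\Phi_{j}(x)=(x_{1},x_{2},x_{3}-h_{j}(x_{1},x_{2}))$, which flattens the boundary to $\{y_{3}=0\}$ and has Jacobian and inverse Jacobian bounded in terms of $\|\na h_{j}\|_{\infty}$. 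The geometric fact that makes everything work is that near the boundary the distance function is comparable to the new normal variable, namely $c_{1}\,y_{3}\le d(\Phi_{j}^{-1}(y))\le c_{2}\,y_{3}$, with constants depending only on the $C^{1}$ (Lipschitz) norms of the $h_{j}$; this lets me replace $d$ by $y_{3}$ up to a fixed multiplicative constant.

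Finally I would invoke the one-dimensional Hardy inequality. In the flattened variables, for fixed tangential $(y_{1},y_{2})$ the function $g(t)=(\zeta_{j}f)(y_{1},y_{2},t)$ vanishes for $t$ near $0$ (because $f$ is compactly supported in $\om$), so for $p>1$ the classical estimate $\int_{0}^{\infty}|g(t)/t|^{p}\,dt\le (p/(p-1))^{p}\int_{0}^{\infty}|g'(t)|^{p}\,dt$ applies. Integrating over $(y_{1},y_{2})$, replacing $y_{3}$ by $d$ via the comparability above, bounding $|\p_{y_{3}}(\zeta_{j}f)|\le C(|\na f|+|f|)$ (the extra $|f|$ arising from the derivative of $\zeta_{j}$), transferring back through $\Phi_{j}^{-1}$, and summing over $j$ gives control of the collar integral by $C\,(\|\na f\|_{L^{p}(\om)}^{p}+\|f\|_{L^{p}(\om)}^{p})$. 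Adding the interior estimate yields the claim with $C=C(p,\om)$. The main obstacle is twofold: first, proving the comparability $d(x)\approx y_{3}$ with uniform constants, which is precisely where the $C^{1}$ regularity of $\p\om$ is genuinely used; and second, the endpoint $p=1$, for which the one-dimensional Hardy constant $p/(p-1)$ degenerates, so the graph-flattening route breaks down and one must either restrict to $p>1$ or appeal to the weighted-space machinery of \cite{ku}.
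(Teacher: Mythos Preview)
The paper does not supply its own proof of this lemma at all: it is stated with the citation \cite{ku} and used as a black box. So there is nothing in the paper to compare your argument against line by line. Your outline---density, interior/collar split, boundary flattening, and reduction to the one-dimensional Hardy inequality $\int_{0}^{\infty}|g(t)/t|^{p}\,dt\le (p/(p-1))^{p}\int_{0}^{\infty}|g'(t)|^{p}\,dt$---is the standard textbook route and is correct for $p>1$. The comparability $d(x)\approx y_{3}$ you flag is indeed the place where the $C^{1}$ regularity of $\partial\om$ enters, and your handling of the cutoff derivative is fine since the extra $\|f\|_{L^{p}}$ it produces is absorbed into the full $W^{1,p}_{0}$ norm on the right-hand side.

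You are also right that the endpoint $p=1$ is a genuine obstruction to this particular argument, since the one-dimensional constant blows up. Two remarks on that. First, the inequality as stated (with the full $W^{1,1}_{0}$ norm rather than just $\|\nabla f\|_{L^{1}}$) is in fact true for $p=1$ on bounded $C^{1}$ (even Lipschitz) domains, but the proof goes through a different mechanism---e.g.\ a maximal-function or weighted-integral argument as in \cite{ku}---rather than the sharp one-dimensional inequality. Second, and more to the point for this paper, the lemma is only ever invoked here with $p=2$ (see \eqref{m3q} and the analogous estimates in the appendix), so your argument already covers every application that actually occurs. If you want to present a self-contained proof, it would be entirely reasonable to restrict the statement to $p>1$, or simply to $p=2$, and cite \cite{ku} for the general case as the authors do.
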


\bigskip

 \section{Proof of Theorem \ref{t1}}\la{sec_proof}
  
In the following,   we will still use the conventions mentioned in Section \ref{sec_prep}.  

Taking the $j$-th component of equations $\eqref{1.1}_{2}$, testing it against $\eta_{\ep}(x^\ep-y,t-s)$, summing up  the expressions, and using  \eqref{ws3},  we deduce for every $(x,t)\in V_{i}\times (\ep,T-\ep)$ with  $i\in \{1,2,\cdot\cdot\cdot,k\}$, 
 \be\la{u3} \p_{t}(\ti{\n}\ti{u})_{i}^{\ep}+\div (\ti{\n}\ti{u} \otimes \ti{u})_{i}^{\ep}+\na (\ti{\n}^{\ga})_{i}^{\ep}-\mu \lap \ti{u}_{i}^{\ep}-(\mu+\lambda)\na \div \ti{u}_{i}^{\ep}=0.\ee   
To explain how we obtain \eqref{u3}, let us  take the term  $\div (\ti{\n}\ti{u} \otimes \ti{u})_{i}^{\ep}$ for example. In fact,   by \eqref{ws3},
 \bnn\ba \div (\ti{\n}\ti{u} \otimes \ti{u}^{j})_{i}^{\ep}(x,t)
 &=\int_{0}^{T}\int_{\mathcal{V}_{i}}\div_{y}(\ti{\n}\ti{u} \otimes \ti{u}^{j})(y,s) \eta_{\ep}(x-y,t-s)dyds\\
 &=\int_{0}^{T}\int_{\mathcal{V}_{i}} (\n u\otimes u^{j} )(y-\ep \vec{n}(x_1),s)\cdot \na_{x}\eta_{\ep}(x-y,t-s)dyds\\
 &=\int_{0}^{T}\int_{\mathcal{V}_{i}-\ep \vec{n}(x_1)} \n u\otimes u^{j} (y,s)\cdot \na_{x}\eta_{\ep}(x^\ep-y,t-s)dyds.\ea\enn

Similarly,  if we test   $\eqref{1.1}_{2}$ against   $\eta_{\ep}(x-y,t-s)$, we infer that for every $(x,t)\in V_{0}\times (\ep,T-\ep)$,
 \be\la{u4}  \p_{t} (\n u)^{\ep} +\div (\n u \otimes u)^{\ep} +\na ( \n^{\ga})^{\ep} -\mu \lap u^{\ep}-(\mu+\lambda)\na \div u^{\ep}=0.\ee
 
Combining  \eqref{u3} with \eqref{u4} implies that    \be\ba\la{u1}&\p_{t}\left(\xi_{0}(\n u)^{\ep} +\sum_{i=1}^{k}\xi_{i} (\ti{\n}\ti{ u})_{i}^{\ep}\right)+\left(\xi_{0}\div (\n u \otimes u)^{\ep}+\sum_{i=1}^{k}\xi_{i} \div (\ti{\n}\ti {u} \otimes \ti{u})_{i}^{\ep}\right)\\
&+\left(\xi_{0}\na (\n^{\ga})^{\ep} +\sum_{i=1}^{k}\xi_{i}\na  (\ti{\n}^{\ga})_{i}^{\ep} \right)\\
&-\left(\xi_{0}(\mu \lap u^{\ep}+(\mu+\lambda)\na \div u^{\ep})+\sum_{i=1}^{k}\xi_{i}(\mu \lap \ti{u}_{i}^{\ep}+(\mu+\lambda)\na \div \ti{u}_{i}^{\ep})\right)=0,\ea\ee
where $\xi_{0}$ and $\xi_{i}$ are given in  \eqref{u2}. 

Next, we fix   small  constants $\tau>0,\,\de>0,$ and define the  cut-off functions $\psi_{\tau}(t)\in C_{0}^{1}((\tau,T-\tau))$ and $\phi_{\de}(x)\in C_{0}^{1}(\om)$ satisfying
\be\label{phi}\left\{\ba &0\le \phi_{\de}(x)\le 1,\,\,\,\phi_{\de}(x)=1\,\,{\rm if}\,\, x\in \om\,\,\,{\rm and}\,\,\,dist\,(x,\,\p\om)\ge \de,\\
&\phi_{\de}\rightarrow 1\,\,\,{\rm as}\,\,\,\de\rightarrow0,\quad {\rm and}\,\,\,|\na \phi_{\de}|\le \frac{2}{dist(x,\p\om)}.\ea\right.\ee
This way $\psi_{\tau} \phi_{\de} [u]^{\ep}$ is a legitimate test function, where $[u]^{\ep}$ is defined in \eqref{ws15}. Multiplying   \eqref{u1}   by $\psi_{\tau} \phi_{\de} [u]^{\ep}$ and integrating  it over $\om\times (0,T)$ leads to   
\be\la{2.1}  \ba
&\int_{0}^{T} \int_{\om}\psi_{\tau}\phi_{\de}[u]^{\ep}\p_{t}\left(\xi_{0}(\n u)^{\ep}+\sum_{i=1}^{k}\xi_{i}  (\ti{\n}\ti{ u})_{i}^{\ep}\right) \\
&+\int_{0}^{T}\int_{\om} \psi_{\tau} \phi_{\de}[u]^{\ep}\left(\xi_{0}\div (\n u \otimes u)^{\ep}+\sum_{i=1}^{k}\xi_{i} \div (\ti{\n}\ti {u} \otimes \ti{u})_{i}^{\ep}\right)\\
&+\int_{0}^{T}\int_{\om} \psi_{\tau}\phi_{\de}[u]^{\ep}\left(\xi_{0}\na (\n^{\ga})^{\ep} +\sum_{i=1}^{k}\xi_{i}\na  (\ti{\n}^{\ga})_{i}^{\ep} \right)\\
&-\int_{0}^{T}\int_{\om}\psi_{\tau} \phi_{\de}[u]^{\ep}\left(\xi_{0}(\mu \lap u^{\ep}+(\mu+\lambda)\na \div u^{\ep})+\sum_{i=1}^{k}\xi_{i}(\mu \lap \ti{u}_{i}^{\ep}+(\mu+\lambda)\na \div \ti{u}_{i}^{\ep})\right)=0.\ea\ee

In the rest,  we will calculate the terms in \eqref{2.1}  one by one,  and send   $\ep,\,\de,\,\tau$ to zero in the following three steps.

\subsection{Step 1:  $\ep$-limit for \eqref{2.1}}
  
 \begin{lemma}\la{l1} For fixed $\tau$ and $\de$,  the first    two terms in \eqref{2.1} satisfy   \be\ba\la{liang1}& \lim_{\ep\rightarrow0}\int_{0}^{T}\int_{\om} \psi_{\tau} \phi_{\de}[u]^{\ep}\p_{t}\left(\xi_{0}(\n u)^{\ep}+\sum_{i=1}^{k}\xi_{i}  (\ti{\n}\ti{ u})_{i}^{\ep}\right)\\
 &\quad+ \lim_{\ep\rightarrow0} \int_{0}^{T}\int_{\om} \psi_{\tau} \phi_{\de}[u]^{\ep}\left(\xi_{0}\div (\n u \otimes u)^{\ep}+\sum_{i=1}^{k}\xi_{i} \div (\ti{\n}\ti {u} \otimes \ti{u})_{i}^{\ep}\right)\\
&=-\frac{1}{2}\int_{0}^{T}\int_{\om} \psi_{\tau}' \phi_{\de}\n  |u|^{2}-\frac{1}{2}\int_{0}^{T}\int_{\om} \psi_{\tau}  \n u \cdot \na \phi_{\de}   |u|^{2}.\ea\ee
\end{lemma}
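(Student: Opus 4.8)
The plan is to treat the two groups of terms in \eqref{liang1} separately, pass to the $\ep$-limit in each using the convergence properties of the global mollification from Proposition \ref{po} together with the commutator estimates of Lemma \ref{lem} and Corollary \ref{co}, and then recognize the surviving terms as the time derivative and spatial advection of $\frac12\n|u|^2$.

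First I would handle the time-derivative term. On the support of $\psi_\tau$ (i.e. away from $t=0,T$) all the mollified quantities are smooth, so I can integrate by parts in $t$ legitimately; since $\psi_\tau$ is compactly supported in time there is no boundary contribution. After the integration by parts the integrand becomes $-\psi_\tau'\phi_\de[u]^\ep\big(\xi_0(\n u)^\ep+\sum_i\xi_i(\ti\n\ti u)_i^\ep\big)-\psi_\tau\phi_\de\,\p_t[u]^\ep\big(\xi_0(\n u)^\ep+\sum_i\xi_i(\ti\n\ti u)_i^\ep\big)$. The commutator estimates then let me replace $(\n u)^\ep$ by $\n u^\ep$ on $V_0$ and $(\ti\n\ti u)_i^\ep$ by $\n\,\ti u_i^\ep$ on $V_i$, with errors going to zero in $L^{\underline r}_{loc}$: this is exactly \eqref{6} and \eqref{p3}. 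Using \eqref{p9} — that $[u]^\ep-u^\ep$ and $[u]^\ep-\ti u_i^\ep$ vanish in $L^p_{loc}(W^{1,p})$ — together with $u^\ep\to u$, $\ti u_i^\ep\to u$ strongly, I can collapse all the partition-of-unity pieces: $\sum_i\xi_i\,[u]^\ep\cdot\n\,\ti u_i^\ep\to \n|u|^2$ and similarly for the $\p_t$ piece. The exponents match because $\n$ is bounded, $u\in L^p_tL^q_x$ with $p\ge4,q\ge6$, and $\p_t[u]^\ep$, $\na[u]^\ep$ stay bounded in the dual spaces needed to pair against the commutator errors (here is where $p\ge4$, $q\ge6$ enter, via $\frac1{r_1}+\frac1{r_2}=\frac1r$ with $r=1$). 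The net result of the time term is $-\int\!\int\psi_\tau'\phi_\de\,\n|u|^2-\int\!\int\psi_\tau\phi_\de\,\n\,u\cdot\p_t u$, where I should note $\p_t u$ should really be interpreted through $\frac12\p_t|u|^2$; I will keep the $\p_t|u|^2$ form and combine it with the convective term below.

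Next I would treat the divergence/convective term. Writing $\div(\n u\otimes u)^\ep=\div(\n u\otimes u^\ep)+[\div(\n u\otimes u)^\ep-\div(\n u\otimes u^\ep)]$ and integrating by parts the clean piece against $\psi_\tau\phi_\de\xi_0[u]^\ep$ moves the $\div$ onto $\phi_\de\xi_0[u]^\ep$, producing $\na\phi_\de$, $\na\xi_i$, and $\na[u]^\ep$ contributions; the $\na\xi_i$ contributions cancel in the sum because $\sum\xi_i=1$ forces $\sum\na\xi_i=0$ once the pieces are identified in the limit. The commutator error for the convective term is controlled by $\|u\|_{L^p_tL^q_x}\|\n u\|_{\text{suitable}}$ and vanishes by Lemma \ref{lemma2}/\eqref{6}. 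In the limit the interior term becomes $-\frac12\int\!\int\psi_\tau\phi_\de\,\n u\cdot\na|u|^2-\frac12\int\!\int\psi_\tau\n u\cdot\na\phi_\de\,|u|^2$ (after recognizing $\n u\otimes u:\na u=\frac12\n u\cdot\na|u|^2$), plus the boundary-cutoff gradient term. Adding this to the $\p_t|u|^2$ term from the first part, the combination $\frac12\p_t(\n|u|^2)+\frac12\div(\n u|u|^2)$ collapses using the mass equation $\n_t+\div(\n u)=0$ to $\frac12\n\,\p_t|u|^2+\frac12\n u\cdot\na|u|^2$, and after integration by parts against $\psi_\tau\phi_\de$ this is exactly $-\frac12\int\!\int\psi_\tau'\phi_\de\,\n|u|^2-\frac12\int\!\int\psi_\tau\,\n u\cdot\na\phi_\de\,|u|^2$, which is the claimed right-hand side of \eqref{liang1}.

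The main obstacle I expect is bookkeeping the partition-of-unity error terms carefully: every $\na\xi_i$ that appears when integrating by parts must be shown either to vanish in the sum (via $\sum\na\xi_i=0$) or to be absorbed into an error that tends to zero, and this requires that $[u]^\ep-\ti u_i^\ep\to0$ in $W^{1,p}_{loc}(V_i)$ strongly (so that $\na[u]^\ep$ can be replaced by $\na\ti u_i^\ep$), which is precisely \eqref{p9}. A secondary delicate point is verifying that all products make sense in $L^1_{loc}$: one needs the commutator errors (which live in $L^{\underline r}$) to be paired against quantities bounded in the conjugate exponent uniformly in $\ep$, and this is exactly where the hypotheses $p\ge4$, $q\ge6$ on $u$ and the bound $0\le\n\le\bar\n$, $\na\sqrt\n\in L^\infty_tL^2_x$ (hence $\na\n\in L^\infty_tL^2_x$ since $\na\n=2\sqrt\n\,\na\sqrt\n$) are used to make $\|u\|_{L^{r_2}}\|\na\n\|_{L^{r_1}}$ and $\|u\|_{L^{r_2}}\|\p_t\n\|_{L^{r_1}}$ finite with $r=1$. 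Once these uniform bounds are in place, the Dominated/Lebesgue-type arguments already packaged in Lemmas \ref{lem}, \ref{lemma2} and Corollary \ref{co} finish the $\ep\to0$ passage.
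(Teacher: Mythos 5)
Your overall strategy (commutator estimates to replace the mollified nonlinear quantities by $\n$ times the mollified velocity, followed by a symmetrization $\n v\cdot\p_t v=\tfrac12\n\,\p_t|v|^2$ and an appeal to the continuity equation) is the same as the paper's, and your final bookkeeping of the coefficients is formally consistent. However, as written the argument has a genuine gap in the \emph{order} of the limit $\ep\to0$ and the chain-rule step. After integrating by parts in time you are left with terms of the form $\int\psi_{\tau}\phi_{\de}\,\p_{t}[u]^{\ep}\cdot\big(\xi_{0}(\n u)^{\ep}+\sum_i\xi_i(\ti\n\ti u)_i^{\ep}\big)$, and you propose to pass to the limit in these ``similarly'' to the zeroth-order pieces. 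This fails: $\p_{t}[u]^{\ep}$ is \emph{not} bounded uniformly in $\ep$ in any useful space (it scales like $\ep^{-1}\|u\|_{L^pL^q}$, since $u$ has no time regularity beyond $L^p_t$), and the limit objects you then write down --- $\int\psi_\tau\phi_\de\,\n u\cdot\p_t u$ and $\p_t|u|^2$ for the weak solution $u$ --- do not exist. The only mechanisms that tame $\p_t[u]^\ep$ are (i) the exact cancellation $\n[u]^{\ep}\cdot\p_{t}[u]^{\ep}=\tfrac12\n\,\p_{t}|[u]^{\ep}|^{2}$, which must then be integrated against $\psi_\tau\phi_\de$ using the continuity equation \emph{with the smooth test function} $\psi_\tau\phi_\de|[u]^\ep|^2/2$ (this is precisely the paper's identity \eqref{y1}, which reduces everything to the derivative-free expression $-\tfrac12\int\psi_\tau'\phi_\de\n|[u]^\ep|^2-\tfrac12\int\psi_\tau\n u\cdot\na\phi_\de|[u]^\ep|^2$ \emph{before} letting $\ep\to0$); and (ii) the $O(\ep)$ gain from the density difference in the commutator $(\ti\n-\n)\p_t\ti u_i^\ep$, which is exactly what Corollary \ref{co} exploits. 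Your closing step, ``the combination $\tfrac12\p_t(\n|u|^2)+\tfrac12\div(\n u|u|^2)$ collapses using the mass equation,'' is the formal energy computation performed on the un-mollified solution; it requires using $|u|^2$ as a test function in $\eqref{1.1}_1$, which is not admissible at this regularity and is the very difficulty the mollification is designed to circumvent.

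The fix is a reordering rather than a new idea: do not integrate by parts in time at the outset. Instead, as in \eqref{m1}--\eqref{m2} of the paper, subtract $\p_t(\n[u]^\ep)$ and $\div(\n u\otimes[u]^\ep)$ from the respective mollified terms, show the two commutator remainders $I_1,I_2$ vanish (your commutator and partition-of-unity estimates, including the $\na\xi_i$ bookkeeping via \eqref{p9} and the integrability of $\p_t\n$ from \eqref{o}, are correct and are exactly what is needed here), and only then apply the chain rule and the continuity equation to the surviving pair $\int\psi_\tau\phi_\de[u]^\ep\p_t(\n[u]^\ep)+\int\psi_\tau\phi_\de[u]^\ep\div(\n u\otimes[u]^\ep)$ at the mollified level, passing $\ep\to0$ last.
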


\begin{proof}
Firstly, we have
\be\la{m1}\ba &\int_{0}^{T}\int_{\om} \psi_{\tau} \phi_{\de}[u]^{\ep}\p_{t}\left(\xi_{0}(\n u)^{\ep}+\sum_{i=1}^{k}\xi_{i}  (\ti{\n}\ti{ u})_{i}^{\ep}\right)\\
&=\int_{0}^{T}\int_{\om}\psi_{\tau} \phi_{\de}[u]^{\ep}\p_{t}\left(\xi_{0}(\n u)^{\ep}+\sum_{i=1}^{k}\xi_{i} (\ti{\n}\ti{ u})_{i}^{\ep}-\n [u]^{\ep}\right)+\int_{0}^{T}\int_{\om} \psi_{\tau}\phi_{\de}[u]^{\ep} \p_{t}(\n [u]^{\ep}) \\ 
&=: I_{1}+\int_{0}^{T}\int_{\om} \psi_{\tau}\phi_{\de} [u]^{\ep} \p_{t}(\n [u]^{\ep}) .\ea\ee

Let us show 
\be\la{p7}\ba & \lim_{\ep\rightarrow0}I_{1}= 0.\ea\ee
The definition of $[u]^{\ep}$ in   \eqref{ws15} implies    
\bnn \n [u]^{\ep}=\n \left(\xi_{0}u^{\ep}+\sum_{i=1}^{k}\xi_{i}\ti{u}^{\ep}_{i} \right)=\xi_{0}\n u^{\ep}+\sum_{i=1}^{k}\xi_{i}\n \ti{u}^{\ep}_{i}.\enn
This,  along  with  \eqref{u2},  \eqref{p1},   \eqref{p5}, 
implies that 
\be\la{ws1}\ba   |I_{1}| &\le  C\int_{\tau}^{T-\tau}\|u\|_{L^{q}} \left(\|\xi_{0}\p_{t} ( (\n u)^{\ep}-\n u^{\ep})\|_{L^{\frac{q}{q-1}}(\om)}
+\sum_{i=1}^{k}\|\xi_{i}\p_{t}( (\ti{\n}\ti{ u})_{i}^{\ep}-\n  \ti{u}_{i}^{\ep})\|_{L^{\frac{q}{q-1}}(\om)}\right)\\
&\le C\int_{\tau}^{T-\tau}\|u\|_{L^{q}} \left(\|\p_{t} ( (\n u)^{\ep}-\n u^{\ep})\|_{L^{\frac{q}{q-1}}(V_{0})}
+\sum_{i=1}^{k}\|\p_{t}( (\ti{\n}\ti{ u})_{i}^{\ep}-\n  \ti{u}_{i}^{\ep})\|_{L^{\frac{q}{q-1}}(V_{i})}\right)\\
&\le C\int_{0}^{T}\|u\|_{L^{q}}^{2}\left(\||\p_{t}\n \|_{L^{\frac{q}{q-2}}}+\|\na \n\|_{L^{\frac{q}{q-2}}}\right).\ea\ee

On the other hand,  
it follows from     \eqref{1.12},  \eqref{1.5a},  \eqref{1.5} that 
\be\la{o}  \n_{t}= -\left(\n \div u+2\sqrt{\n} u \cdot \na \sqrt{\n}\right)\in L^{2}(0,T;L^{2})+L^{p}(0,T;L^{\frac{2q}{q+2}}).\ee
Therefore,  from \eqref{o}, \eqref{1.5a},  \eqref{1.5}   we deduce
\bnn\ba
|I_{1}|\le C\int_{0}^{T}\left( \|u\|_{L^{q}}^{p}+ \||\p_{t}\n \|_{L^{\frac{q}{q-2}}}^{\frac{p}{p-2}}+\|\na \n\|_{L^{\frac{q}{q-2}}}^{\frac{p}{p-2}} \right) \le C,
\ea\enn provided that $p\ge 4,\,\,q\ge6.$

Furthermore,   with    \eqref{o} and  \eqref{1.5a},  from  Lemma \ref{lem} and Corollary \ref{co}  we obtain 
$$\p_{t}\left((\n u)^{\ep} -\n u^{\ep}\right) {\rm in }\ L^{\frac{2p}{p+2}}_{loc}\left(0,T; L^{\frac{2q}{q+4}}(V_{0})\right), $$
$$\p_{t}\left((\ti{\n} \ti{u})_{i}^{\ep} -\n \ti{u}_{i}^{\ep}\right)\rightarrow 0 \ {\rm in }\ L^{\frac{2p}{p+2}}_{loc}\left(0,T; L^{\frac{2q}{q+4}}(V_{i})\right).$$
This concludes   \eqref{p7}.
 
Secondly,     the  convection term can be treated as
  \be\la{m2}\ba
&\int_{0}^{T}\int_{\om} \psi_{\tau} \phi_{\de}[u]^{\ep}\left(\xi_{0}\div (\n u \otimes u)^{\ep}+\sum_{i=1}^{k}\xi_{i} \div (\ti{\n}\ti {u} \otimes \ti{u})_{i}^{\ep}\right)\\
&=\int_{0}^{T}\int_{\om}\psi_{\tau}\phi_{\de}[u]^{\ep}\left(\xi_{0}\div (\n u \otimes u)^{\ep}+\sum_{i=1}^{k}\xi_{i} \div (\ti{\n}\ti{ u} \otimes \ti{u})_{i}^{\ep}-{\rm div}(\n u\otimes [u]^{\ep})\right)\\
&\quad+\int_{0}^{T}\int_{\om}\psi_{\tau} \phi_{\de}[u]^{\ep} {\rm div}(\n u\otimes [u]^{\ep})\\
&=: I_{2}+\int_{0}^{T}\int_{\om}\psi_{\tau} \phi_{\de}[u]^{\ep} {\rm div}(\n u\otimes [u]^{\ep}).\ea\ee
We claim that
\be\la{2.11}\lim_{\ep\rightarrow0}  I_{2}= 0.\ee
In fact,  by \eqref{u2},    
\be\la{ws2}\ba   |I_{2}| 
&\le C\int_{\tau}^{T-\tau}\|u\|_{L^{q}}  \|\div\left( (\n u \otimes u)^{\ep} -\n u \otimes [u]^{\ep}\right)\|_{L^{\frac{q}{q-1}}(V_{0})}\\ 
&\quad+ C\sum_{i=1}^{k}\int_{\tau}^{T-\tau}\|u\|_{L^{q}}   \|\div \left(\left(\ti{\n}\ti{u}\otimes\ti{u}\right)_{i}^{\ep}-\n u\otimes [u]^{\ep}\right)\|_{L^{\frac{q}{q-1}}(V_{i})}\\
&=: I_{21}+\sum_{i=1}^{k}I_{22}^{i}.
\ea\ee
Making use of \eqref{p5} and  \eqref{1.5a},  one has 
 \bnn \|\na(\n u)\|_{L^{\frac{q}{q-2}}}\le C\|u\|_{H^{1}}(1+\|\na\sqrt{\n}\|_{L^{2}})\le C \| \na u\|_{L^{2}}. \enn 
Thus, 
\be\la{p11}\ba & |I_{21}|\\
&\le \int_{\tau}^{T-\tau}\|u\|_{L^{q}} \left(\|\div\left( (\n u \otimes u)^{\ep} -\n u\otimes u^{\ep}\right)\|_{L^{\frac{q}{q-1}}(V_{0})}+\|\div\left( \n u \otimes (u^{\ep} - [u]^{\ep})\right)\|_{L^{\frac{q}{q-1}}(V_{0})}\right)\\
&\le C \int_{0}^{T}\|u\|_{L^{q}}^{2}\|\na (\n u)\|_{L^{\frac{q}{q-2}}}+\int_{\tau}^{T-\tau}\|u\|_{L^{q}}  \|\div\left( \n u \otimes (u^{\ep} - [u]^{\ep})\right)\|_{L^{\frac{q}{q-1}}(V_{0})}\\
&\le C \int_{0}^{T}\left(\|u\|_{L^{q}}^{4}+\|\na u\|_{L^{2}}^{2}\right)+\int_{\tau}^{T-\tau}\|u\|_{L^{q}}  \|\div\left( \n u \otimes (u^{\ep} - [u]^{\ep})\right)\|_{L^{\frac{q}{q-1}}(V_{0})},\ea\ee
Notice that 
\be\la{p11a}\ba 
& \int_{\tau}^{T-\tau}\|u\|_{L^{q}} \|\div\left( \n u \otimes (u^{\ep} - [u]^{\ep})\right)\|_{L^{\frac{q}{q-1}}(V_{0})}\\
&\ \le C\int_{\tau}^{T-\tau}\|u\|_{L^{q}}\left(\|\na(\n u)\|_{L^{\frac{q}{q-2}}} \|u^{\ep}-[u]^{\ep}\|_{L^{q}(V_{0})} +\|\n u\|_{L^{q}}\|\na (u^{\ep}-[u]^{\ep})\|_{L^{\frac{q}{q-2}}(V_{0})}\right)\\
&\ \le  C\int_{\tau}^{T-\tau}\|u\|_{L^{q}} \left( \|u\|_{H^{1}}\|u^{\ep}-[u]^{\ep}\|_{L^{q}(V_{0})} +\|\na (u^{\ep}-[u]^{\ep})\|_{L^{2}(V_{0})}\|u\|_{L^{q}}\right)\\
&\ \le  C\left(\int_{\tau}^{T-\tau} \|u^{\ep}-[u]^{\ep}\|_{L^{q}(V_{0})}^{p}+\|\na (u^{\ep}-[u]^{\ep})\|_{L^{2}(V_{0})}^{2}\right)^{\frac{1}{2}},\ea\ee
and, owing to  \eqref{1.12}, \eqref{1.5},     \eqref{p9},
\be\la{6y}\lim_{\ep\rightarrow0} \left(\|u^{\ep}-[u]^{\ep}\|_{L^{p}_{loc}(0,T;L^{q}(V_{0}))}+\|\na (u^{\ep}-[u]^{\ep})\|_{L^{2}_{loc}(0,T;L^{2}(V_{0}))}\right)=0.\ee
We conclude    from   \eqref{6y} and   \eqref{6} that 
\be\la{p10}\lim_{\ep\rightarrow0} I_{21}=0.\ee
 
By  \eqref{p2},  a similar argument to \eqref{p11}  and  \eqref{p11a} infers that 
\be\la{ws2b}
\ba  & |I_{22}^{i}| \\
&\le  \int_{\tau}^{T-\tau}\|u\|_{L^{q}} \left( \|\div\left( (\ti{\n}\ti{u}\otimes \ti{u})_{i}^{\ep}  -\n u \otimes \ti{u}_{i}^{\ep}\right)\|_{L^{\frac{q}{q-1}}(V_{i})}+\|\div\left( \n u\otimes(\ti{u}_{i}^{\ep}-[u]^{\ep})\right)\|_{L^{\frac{q}{q-1}}(V_{i})}\right)\\
&\le C \int_{0}^{T}\left(\|u\|_{L^{q}}^{4}+\|\na u\|_{L^{2}}^{2}\right)+ C \int_{\tau}^{T-\tau} \left(\|u^{\ep}-[u]^{\ep}\|_{L^{q}(V_{i})}^{p}+\|\na (u^{\ep}-[u]^{\ep})\|_{L^{2}(V_{i})}^{2}\right).\ea\ee
and consequently,  from   \eqref{p3} and   \eqref{6y}   we get  
\bnn\lim_{\ep\rightarrow0} I_{22}^{i}=0.\enn
This together with  \eqref{p10}  implies  \eqref{2.11}.

Next,   by the continuity  equation $\eqref{1.1}_{1}$, a simple computation shows that
 \be\la{y1}\ba &\int_{0}^{T}\int_{\om} \psi_{\tau}\phi_{\de} [u]^{\ep} \p_{t}(\n [u]^{\ep})  +\int_{0}^{T}\int_{\om}\psi_{\tau} \phi_{\de}[u]^{\ep} {\rm div}(\n u\otimes [u]^{\ep})\\
&=-\frac{1}{2}\int_{0}^{T}\int_{\om}\psi_{\tau}' \phi_{\de}\n  |[u]^{\ep}|^{2}-\frac{1}{2}\int_{0}^{T}\int_{\om} \psi_{\tau}\n u \cdot \na \phi_{\de}   |[u]^{\ep}|^{2}\\
&\quad -\int_{0}^{T}\int_{\om} \n \p_{t}\left(\phi_{\de}\psi_{\tau}\frac{|[u]^{\ep}|^{2}}{2} \right) - \n u\cdot \na \left(\phi_{\de}\psi_{\tau}\frac{|[u]^{\ep}|^{2}}{2} \right)\\
&=-\frac{1}{2}\int_{0}^{T}\int_{\om}\psi_{\tau}' \phi_{\de}\n  |[u]^{\ep}|^{2}-\frac{1}{2}\int_{0}^{T}\int_{\om} \psi_{\tau}\n u \cdot \na \phi_{\de}   |[u]^{\ep}|^{2}\\
&\rightarrow -\frac{1}{2}\int_{0}^{T}\int_{\om}\psi_{\tau}' \phi_{\de}\n  |u|^{2}-\frac{1}{2}\int_{0}^{T}\int_{\om} \psi_{\tau} \n u \cdot \na \phi_{\de}   |u|^{2}, \quad {\rm as}\,\,\ep\rightarrow0.\ea\ee
In conclusion,  owing to  \eqref{p7}, \eqref{2.11}, \eqref{y1},   we  get \eqref{liang1} from \eqref{m1} and \eqref{m2}.
\end{proof}

\begin{lemma} For fixed $\tau$ and $\de$,  the pressure  term  in \eqref{2.1} satisfies 
 \la{l2}\be\ba \la{liang2}&\lim_{\ep\rightarrow0}\int_{0}^{T}\int_{\om} \psi_{\tau}\phi_{\de} [u]^{\ep}\left(\xi_{0}\na (\n^{\ga})^{\ep} +\sum_{i=1}^{k}\xi_{i}\na  (\ti{\n}^{\ga})_{i}^{\ep} \right)= \int_{0}^{T}\int_{\om} \psi_{\tau} \phi_{\de}u\cdot \na \n^{\ga}.\ea\ee
\end{lemma}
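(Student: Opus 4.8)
The plan is to mimic the structure of Lemma \ref{l1}: split the pressure term into a "commutator'' piece that vanishes as $\ep\to0$ and a "resolved'' piece that converges to the desired limit. Specifically, using the partition of unity \eqref{u2} and the definition \eqref{ws15} of $[u]^{\ep}$, I would write
\be\la{pressplit}\ba
&\int_{0}^{T}\int_{\om}\psi_{\tau}\phi_{\de}[u]^{\ep}\left(\xi_{0}\na(\n^{\ga})^{\ep}+\sum_{i=1}^{k}\xi_{i}\na(\ti{\n}^{\ga})_{i}^{\ep}\right)\\
&=\int_{0}^{T}\int_{\om}\psi_{\tau}\phi_{\de}[u]^{\ep}\left(\xi_{0}\left(\na(\n^{\ga})^{\ep}-\na\n^{\ga}\right)+\sum_{i=1}^{k}\xi_{i}\left(\na(\ti{\n}^{\ga})_{i}^{\ep}-\na\n^{\ga}\right)\right)\\
&\quad+\int_{0}^{T}\int_{\om}\psi_{\tau}\phi_{\de}[u]^{\ep}\,\na\n^{\ga}\\
&=: J_{1}+\int_{0}^{T}\int_{\om}\psi_{\tau}\phi_{\de}[u]^{\ep}\,\na\n^{\ga}.
\ea\ee
For $J_{1}$, note that $\na\n^{\ga}\in L^{r}$ for a suitable $r$ because $0\le\n\le\bar\n$ and $\na\n^{\ga}=\ga\n^{\ga-1}\na\n=2\ga\n^{\ga-1}\sqrt{\n}\,\na\sqrt{\n}\in L^{\infty}_{t}L^{2}_{x}$ by \eqref{1.5a}. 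Hence $(\na\n^{\ga})^{\ep}\to\na\n^{\ga}$ in $L^{2}_{loc}$, and likewise the shifted/mollified version $\na(\ti{\n}^{\ga})_{i}^{\ep}\to\na\n^{\ga}$ in $L^{2}_{loc}(V_{i})$ by the same translation-continuity argument used to establish \eqref{ws10} and \eqref{p9}. Since $[u]^{\ep}$ is bounded in $L^{p}_{t}L^{q}_{x}$ uniformly in $\ep$ (with $q\ge6\ge2$, so the $L^{2}$ convergence pairs against it by Hölder on the fixed time interval $(\tau,T-\tau)$), $J_{1}\to0$.

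For the resolved term, $[u]^{\ep}\to u$ in $L^{p}_{loc}(0,T;W^{1,p}(\om))$, hence in particular in $L^{p}_{loc}L^{q}_{loc}$ by Sobolev embedding, while $\psi_{\tau}\phi_{\de}\na\n^{\ga}$ is a fixed function in $L^{\infty}_{t}L^{2}_{x}$ (and $\psi_{\tau}$ localizes time away from the endpoints). Thus $\int_{0}^{T}\int_{\om}\psi_{\tau}\phi_{\de}[u]^{\ep}\na\n^{\ga}\to\int_{0}^{T}\int_{\om}\psi_{\tau}\phi_{\de}u\cdot\na\n^{\ga}$, which is \eqref{liang2}. (One should double-check the Hölder exponents: pairing $[u]^{\ep}\in L^{p}_{t}L^{q}_{x}$ against $\na\n^{\ga}\in L^{\infty}_{t}L^{2}_{x}$ needs $\tfrac1p+0\le1$ and $\tfrac1q+\tfrac12\le1$, i.e. $q\ge2$, which holds; the time integrability is fine since $p\ge4$ and the domain has finite measure.)

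The main obstacle — really the only subtle point — is handling the boundary pieces $\na(\ti{\n}^{\ga})_{i}^{\ep}$, since $\ti{\n}^{\ga}$ is the shifted density and the mollification is performed after the inward shift as in \eqref{ws3}. I would argue exactly as in the proof of Proposition \ref{po} and equation \eqref{ws10}: write $\na(\ti{\n}^{\ga})_{i}^{\ep}-\na\n^{\ga}=(\na(\ti{\n}^{\ga})_{i}^{\ep}-\widetilde{\na\n^{\ga}})+(\widetilde{\na\n^{\ga}}-\na\n^{\ga})$, where the first term goes to $0$ by standard mollification \eqref{ws16} applied to $\na\n^{\ga}\in L^{2}$, and the second by continuity of translations in $L^{2}$. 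A minor technical wrinkle is that $\ga>1$ may be non-integer, so one should justify $\na\n^{\ga}=\ga\n^{\ga-1}\na\n$ and its $L^{2}$ membership via the chain rule for Sobolev functions composed with the (locally Lipschitz away from $0$, and globally controlled because $\n$ is bounded) map $s\mapsto s^{\ga}$; alternatively work with $\n^{\ga}\in L^{\infty}$ directly and note $\na(\n^{\ga})$ exists in $L^{2}$ by the same computation. With these observations the lemma follows without further difficulty.
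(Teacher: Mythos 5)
Your proposal is correct and takes essentially the same route as the paper: the identical commutator/resolved splitting, the observation that $\na \n^{\ga}\in L^{2}(0,T;L^{2})$ from \eqref{1.5a}, convergence of the interior and shifted mollifications via \eqref{ws16} and \eqref{ws10}, and a H\"older pairing (the paper pairs against $u\in L^{2}_{t,x}$ rather than $L^{p}_{t}L^{q}_{x}$, an immaterial difference) to pass to the limit.
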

\begin{proof} 
 Owing to   \eqref{1.5a},  we have  \be\la{k1}\na \n^{\ga}\in  L^{2}(0,T;L^{2}).\ee
We write  \be\ba\la{2.8}&\int_{0}^{T}\int_{\om} \psi_{\tau} \phi_{\de} [u]^{\ep}\left(\xi_{0}\na (\n^{\ga})^{\ep} +\sum_{i=1}^{k}\xi_{i}\na  (\ti{\n}^{\ga})_{i}^{\ep} \right)\\
&=\int_{0}^{T}\int_{\om} \psi_{\tau} \phi_{\de} [u]^{\ep}\left(\xi_{0}\na (\n^{\ga})^{\ep} +\sum_{i=1}^{k}\xi_{i}\na (\ti{\n}^{\ga})_{i}^{\ep} -\na  \n^{\ga} \right)+\int_{0}^{T}\int_{\om} \psi_{\tau} \phi_{\de}   [u]^{\ep}\cdot \na\n^{\ga}\\
&=: I_{3}+ \int_{0}^{T}\int_{\om} \psi_{\tau} \phi_{\de} [u]^{\ep}\cdot \na \n^{\ga},\ea\ee
where the last integral makes sense due to \eqref{1.12} and \eqref{k1}.

Observe from \eqref{ws16} and \eqref{ws10} that  \be\la{p13} \lim_{\ep\rightarrow0}\left(\|\na ((\n^{\ga})^{\ep} - \n^{\ga})\|_{L^{2}_{loc}(0,T;L^{2}(V_{0}))}+\|\na ((\ti{\n}^{\ga})_{i}^{\ep} - \n^{\ga})\|_{L^{2}_{loc}(0,T;L^{2}(V_{i}))}\right)=0,\ee   and hence,  \be\ba\la{2.8q} &\lim_{\ep\rightarrow0}|I_{3}|\\
&\le C\lim_{\ep\rightarrow0} \|u\|_{L^{2} (0,T;L^{2}(\om))}\left(\|\xi_{0}\na ((\n^{\ga})^{\ep} - \n^{\ga} ) +\sum_{i=1}^{k}\xi_{i}\na ((\ti{\n}^{\ga})_{i}^{\ep} -  \n^{\ga} )\|_{L^{2}_{loc}(0,T;L^{2}(\om))}\right)\\
&\le  C\lim_{\ep\rightarrow0}\left(\|\na ((\n^{\ga})^{\ep} - \n^{\ga})\|_{L^{2}_{loc}(0,T;L^{2}(V_{0}))}+\|\na ((\ti{\n}^{\ga})_{i}^{\ep} - \n^{\ga})\|_{L^{2}_{loc}(0,T;L^{2}(V_{i}))}\right)\\
&=0.\ea\ee

Taking \eqref{2.8q}, \eqref{p9a} into accounts, we  take $\ep\rightarrow 0$ in \eqref{2.8}  and complete the proof for Lemma \ref{l2}. 
\end{proof}

 \begin{lemma}\la{l3}For fixed $\tau$ and $\de$,  the diffusion terms in \eqref{2.1} satisfy \be\ba\la{liang3}&\lim_{\ep\rightarrow0}\int_{0}^{T}\int_{\om}\psi_{\tau} \phi_{\de}[u]^{\ep}\left(\xi_{0}(\mu \lap u^{\ep}+(\mu+\lambda)\na \div u^{\ep})+\sum_{i=1}^{k}\xi_{i}(\mu \lap \ti{u}_{i}^{\ep}+(\mu+\lambda)\na \div \ti{u}_{i}^{\ep})\right)\\
 &=-\int_{0}^{T}\int_{\om}\psi_{\tau}\phi_{\de} \left (\mu|\na u|^{2}+(\mu+\lambda) (\div u)^{2}\right) -\int_{0}^{T}\int_{\om}\psi_{\tau} \na \phi_{\de} \left(  \mu  u\na u+(\mu+\lambda)u \div u\right).\ea\ee
\end{lemma}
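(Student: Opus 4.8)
The plan is to move one derivative off the second-order operators by integrating by parts in $x$, expand the resulting gradients by the Leibniz rule, pass to the limit $\ep\to0$ term by term (with $\tau$ and $\de$ frozen), and finally collapse the partition-of-unity sums using $\sum_{i=0}^{k}\xi_{i}\equiv 1$ and $\sum_{i=0}^{k}\na\xi_{i}\equiv 0$.

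First I would integrate by parts in the spatial variables. Fix $\tau>\ep$. Since $\phi_{\de}$ is compactly supported in $\om$ (cf. \eqref{phi}) and each $\xi_{i}$ is compactly supported in $V_{i}$ (cf. \eqref{u2}), the test factors $\psi_{\tau}\phi_{\de}\xi_{0}[u]^{\ep}$ and $\psi_{\tau}\phi_{\de}\xi_{i}[u]^{\ep}$ are compactly supported in $V_{0}$, resp. $V_{i}$, where $u^{\ep}$, resp. $\ti u_{i}^{\ep}$, is a smooth function once $\ep$ is small; hence the integration by parts produces no boundary contribution, and each of the $\xi_{0}$- and $\xi_{i}$-pieces becomes, up to the overall minus sign, $\int_{0}^{T}\int_{\om}\bigl(\mu\,\na(\psi_{\tau}\phi_{\de}\xi_{i}[u]^{\ep}):\na w_{i}^{\ep}+(\mu+\lambda)\,\div(\psi_{\tau}\phi_{\de}\xi_{i}[u]^{\ep})\,\div w_{i}^{\ep}\bigr)$, with $w_{0}^{\ep}:=u^{\ep}$ and $w_{i}^{\ep}:=\ti u_{i}^{\ep}$ for $1\le i\le k$. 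Expanding $\na(\psi_{\tau}\phi_{\de}\xi_{i}[u]^{\ep})$ (and its divergence) by the Leibniz rule splits each integrand into three groups: a ``diagonal'' group $\psi_{\tau}\phi_{\de}\xi_{i}\,\na[u]^{\ep}:\na w_{i}^{\ep}$ (and the $\div$ analogue), a group carrying $\na\xi_{i}$, and a group carrying $\na\phi_{\de}$.

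Next I would let $\ep\to0$. The analytic inputs are: $u^{\ep}\to u$ and $\na u^{\ep}\to\na u$ strongly in $L^{2}_{loc}([0,T]\times\om)$ (standard mollification, using $u\in L^{2}(0,T;H_{0}^{1}(\om))$ from \eqref{1.12}); and, by Proposition \ref{po} together with \eqref{p9}, \eqref{p9a}, \eqref{ws10}, also $[u]^{\ep}\to u$ and $\na[u]^{\ep}\to\na u$ strongly in $L^{2}_{loc}$, and $w_{i}^{\ep}\to u$, $\na w_{i}^{\ep}\to\na u$ strongly in $L^{2}_{loc}(0,T;L^{2}(V_{i}))$. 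Since $\psi_{\tau}\phi_{\de}$, $\na\phi_{\de}$, $\xi_{i}$, $\na\xi_{i}$ are bounded for fixed $\tau,\de$, the product of any two strongly $L^{2}_{loc}$-convergent factors converges in $L^{1}_{loc}$; therefore each group has a limit, obtained by replacing $[u]^{\ep}$, $\na[u]^{\ep}$, $\na w_{i}^{\ep}$ by $u$, $\na u$, $\na u$. Summing over $i=0,\dots,k$: the diagonal group collapses, by $\sum_{i}\xi_{i}\equiv1$, to $\psi_{\tau}\phi_{\de}\bigl(\mu|\na u|^{2}+(\mu+\lambda)(\div u)^{2}\bigr)$; the $\na\xi_{i}$-group vanishes because $\sum_{i}\na\xi_{i}=\na\bigl(\sum_{i}\xi_{i}\bigr)\equiv0$; and the $\na\phi_{\de}$-group collapses, again by $\sum_{i}\xi_{i}\equiv1$, to $\psi_{\tau}\na\phi_{\de}\bigl(\mu\,u\,\na u+(\mu+\lambda)\,u\,\div u\bigr)$. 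Restoring the minus sign from the integration by parts yields exactly \eqref{liang3}.

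The work is essentially bookkeeping; the only place requiring care is the limit passage, where one checks that each bilinear product lies in $L^{1}_{loc}$ — this is immediate for the $\na[u]^{\ep}:\na w_{i}^{\ep}$-type terms from $u\in L^{2}_{t}H^{1}_{x}$, and for the $\na\xi_{i}$- and $\na\phi_{\de}$-type terms it follows by pairing $[u]^{\ep}\to u$ in $L^{2}_{loc}$ with $\na w_{i}^{\ep}\to\na u$ in $L^{2}_{loc}$. It is worth emphasizing that $\na\phi_{\de}$ is genuinely bounded here only because $\de$ is still frozen; the surviving boundary integral $-\int_{0}^{T}\int_{\om}\psi_{\tau}\na\phi_{\de}\bigl(\mu\,u\,\na u+(\mu+\lambda)\,u\,\div u\bigr)$ carries precisely the factor $\na\phi_{\de}$, for which $|\na\phi_{\de}|\le 2/\mathrm{dist}(x,\p\om)$, so that it can be disposed of later, in the limit $\de\to0$, via the Hardy-type inequality of Lemma \ref{lemma} and $u\in H_{0}^{1}(\om)$.
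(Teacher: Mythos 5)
Your proposal is correct and follows essentially the same route as the paper: one spatial integration by parts (legitimate because $\phi_{\de}\xi_{i}$ is compactly supported in $V_{i}$, so no boundary terms arise), followed by the $\ep\to0$ limit with $\tau,\de$ frozen, using exactly the convergences of Proposition \ref{po}. The only difference is bookkeeping: the paper first replaces each $\lap u^{\ep}$, $\lap\ti{u}_{i}^{\ep}$ by $\lap[u]^{\ep}$ and shows the resulting error $I_{4}\to0$ via \eqref{p9}, whereas you pass to the limit factor by factor and cancel the $\na\xi_{i}$-group with $\sum_{i}\na\xi_{i}=0$ --- both mechanisms rest on the same underlying estimates \eqref{ws16}, \eqref{ws10}, \eqref{p9a}.
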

\begin{proof} 
We see that
 \be\la{q14} \ba & \mu\int_{0}^{T}\int_{\om}\psi_{\tau}\phi_{\de}  [u]^{\ep}\left(\xi_{0}  \lap u^{\ep}+\sum_{i=1}^{k}\xi_{i}  \lap \ti u^{\ep}\right)\\
 &= \mu\int_{0}^{T}\int_{\om}\psi_{\tau}\phi_{\de}  [u]^{\ep}\left(\xi_{0}  \lap u^{\ep}+\sum_{i=1}^{k}\xi_{i}  \lap \ti u^{\ep}- \lap[u]^{\ep}\right) +\mu\int_{0}^{T}\int_{\om}\psi_{\tau} \phi_{\de} [u]^{\ep} \lap [u]^{\ep}  \\
 &=: I_{4}-\mu\int_{0}^{T}\int_{\om}\psi_{\tau}\na \phi_{\de} [u]^{\ep} \na [u]^{\ep} - \mu\int_{0}^{T}\int_{\om}\phi_{\de}  \psi_{\tau} \na  [u]^{\ep}: \na [u]^{\ep}.\ea\ee

We compute
\bnn\ba I_{4} &=\int_{0}^{T}\int_{\om}  \psi_{\tau} \phi_{\de} [u]^{\ep}\left(\xi_{0}\lap (u^{\ep}-[u]^{\ep})+\sum_{i=1}^{k}\xi_{i}\lap (\ti{u}_{i}^{\ep}-[u]^{\ep})\right)\\
 &= - \int_{0}^{T}\int_{\om} \psi_{\tau}\left(\na\phi_{\de}[u]^{\ep}\xi_{0}+\phi_{\de}\div [u]^{\ep}\xi_{0}+\phi_{\de}[u]^{\ep}\na \xi_{0}\right) \na (u^{\ep}-[u]^{\ep})\\
 &\quad -\sum_{i=1}^{k}\int_{0}^{T}\int_{\om} \psi_{\tau}\left(\na\phi_{\de}[u]^{\ep}\xi_{i}+\phi_{\de}\div [u]^{\ep}\xi_{i}+\phi_{\de} [u]^{\ep}\na \xi_{i}\right) \na (\ti{u}_{i}^{\ep}-[u]^{\ep}). \ea\enn  
Thus
\begin{equation*}
|I_4| \le C(\de) \left(\|\na u^{\ep}-\na [u]^{\ep}\|_{L^{2}_{loc}(0,T;L^{2}(V_{0}))}+\sum_{i=1}^{k}\|\na \ti{u}_{i}^{\ep}-\na [u]^{\ep}\|_{L^{2}_{loc}(0,T;L^{2}(V_{i}))}\right).
\end{equation*}
In view of   \eqref{1.12},  \eqref{p9a}, \eqref{p9},  it yields from \eqref{q14} 
that 
\bnn \ba & \mu\int_{0}^{T}\int_{\om}\psi_{\tau}\phi_{\de}  [u]^{\ep}\left(\xi_{0}  \lap u^{\ep}+\sum_{i=1}^{k}\xi_{i}  \lap u^{\ep}\right)\\
&\rightarrow  -\mu\int_{0}^{T}\int_{\om}\psi_{\tau}\na \phi_{\de} u \na u- \mu\int_{0}^{T}\int_{\om}\phi_{\de}  \psi_{\tau} \na u: \na u,\ea\enn
as  $\ep\rightarrow0$.  
Applying the similar arguments  for  other diffusion terms,  we get \eqref{liang3}, and hence the lemma is proved.
\end{proof}

In summary,     Lemma \ref{l1}-\ref{l3}  and equality \eqref{2.1}  imply that  
 \be\la{m3}\ba &-\frac{1}{2}\int_{0}^{T}\int_{\om} \psi_{\tau}' \phi_{\de}\n  |u|^{2}-\frac{1}{2}\int_{0}^{T}\int_{\om} \psi_{\tau}  \n u \cdot \na \phi_{\de}   |u|^{2}\\
 &-\int_{0}^{T}\int_{\om} \psi_{\tau} \phi_{\de}\n^{\ga}\div u -\int_{0}^{T}\int_{\om} \psi_{\tau} u\cdot \na \phi_{\de} \n^{\ga}\\
 &-\int_{0}^{T}\int_{\om}\psi_{\tau}\phi_{\de} \left (\mu|\na u|^{2}+(\mu+\lambda) (\div u)^{2}\right) -\int_{0}^{T}\int_{\om}\psi_{\tau} \na \phi_{\de} \left(  \mu  u\na u+(\mu+\lambda)u \div u\right)=0.\ea\ee

Next we consider taking the $\delta$-limit. 
\subsection{Step 2:  $\de$-limit for \eqref{m3}}
 
Thanks to  \eqref{1.12},    \eqref{1a},    \eqref{1.5a}, \eqref{1.5}, and Lemma \ref{lemma}, it follows that 
 \be\la{m3q}\ba & -\frac{1}{2}\int_{0}^{T}\int_{\om} \psi_{\tau}  \n u \cdot \na \phi_{\de}   |u|^{2}   -\int_{0}^{T}\int_{\om} \psi_{\tau} u\cdot \na \phi_{\de} \n^{\ga} -\int_{0}^{T}\int_{\om}\psi_{\tau} \na \phi_{\de} \left(  \mu  u\na u+(\mu+\lambda)u \div u\right)\\
 &\le C\big\||u||\na \phi_{\de}|\big\|_{L^{2}(0,T;L^{2})} \left(\int_{0}^{T}\int_{\{x:\,dist(x,\,\p\om)<\de\}} |u|^{4}+\n^{2}+|\na u|^{2} \right)^{\frac{1}{2}}\\
  &\le C\left\|\frac{u}{dist(x,\,\p\om)}\right\|_{L^{2}(0,T;L^{2})} \left(\int_{0}^{T}\int_{\{x:\,dist(x,\,\p\om)<\de\}} |u|^{4}+\n^{2}+|\na u|^{2} \right)^{\frac{1}{2}}\\
 &\rightarrow 0,\quad {\rm as}\,\,\,\de\rightarrow0.\ea\ee 
By  \eqref{m3q}, taking  $\de\rightarrow0$ in \eqref{m3}, 
   \be\la{2.3g}  \ba  
 \frac{1}{2}\int_{0}^{T}\int_{\om} \psi_{\tau} '\n |u|^{2}+\int_{0}^{T}\int_{\om} \psi_{\tau}   \n^{\ga} \div u
+\int_{0}^{T}\int_{\om}\psi  \na u: \mathbb{S}=0.\ea\ee 

On the other hand,   it follows from $\eqref{1.1}_{1}$, \eqref{1.12}, \eqref{1.5a} that  \bnn\ba &\int_{0}^{T}\int_{\om} \psi_{\tau}   \n^{\ga} \div u=\int_{0}^{T}\int_{\om}\psi_{\tau} \n^{\ga-1}(\n_{t}+u\cdot\na \n )=\frac{1}{\ga-1}\int_{0}^{T}\int_{\om} \psi_{\tau} '  \n^{\ga}.  \ea\enn  Thus, \eqref{2.3g} becomes 
 \be\la{2.3f}  \ba  
 \frac{1}{2}\int_{0}^{T}\int_{\om} \psi_{\tau}'\n |u|^{2}+\frac{1}{\ga-1}\int_{0}^{T}\int_{\om} \psi_{\tau}'  \n^{\ga}
+\int_{0}^{T}\int_{\om}  \psi_{\tau}  \left(\mu|\na u|^{2} +(\mu+\lambda)|\div u|^{2}\right)=0.\ea\ee 

Denote
\begin{equation}\label{defnED}
E(t) := \int_{\om} \left( {1\over2} \n |u|^2 + {\n^\ga \over \ga - 1} \right) \,dx, \ D(t) := \int_{0}^{t}\int_{\om} \left(\mu|\na u|^{2} +(\mu+\lambda)|\div u|^{2}\right)\,dxds.
\end{equation}
Then \eqref{2.3f} implies that
\begin{equation*}
(E - D)' = 0, \quad \rm{ in } \quad \mathcal{D}'((0,T)). 
\end{equation*}

\subsection{Step 3: Global energy balance}

Finally we will obtain the exact energy equality on the whole time interval $[0,T]$. First we note that $D\in C([0,T])$. Second, we see that an approximation argument shows that \eqref{2.3f} remains valid for functions $\psi_\tau$ belonging only to $W^{1,\infty}$ rather than $C^1$.


It follows from $\eqref{1.1}_{1}$ that for any $\a\ge \frac{1}{2}$,
\bnn \p_{t}(\n^{\a})=-\a \n^{\a}\div u-2\a \n^{\a-\frac{1}{2}} u\cdot\na \sqrt{\n},\enn
which, together with \eqref{1.12} and \eqref{1.5a}, implies 
\bnn \n^{\a}\in  L^{\infty}(0,T; H^{1}(\om)),\quad \p_{t}(\n^{\a})\in L^{2}(0,T; L^{\frac{3}{2}}(\om)).\enn   
   Hence, by the Aubin--Lions  Lemma (cf.\cite{sim}), 
\begin{equation*} \n^{\a}\in C([0,T]; L^{r}(\om)),\quad (r<6).\end{equation*}
In particular we know that 
\begin{equation}\label{rhogamma}
\rho^\gamma \in C([0,T]; L^{1}(\om)).
\end{equation}
 
In a similar way, 
\be\la{m13} \n u\in L^{\infty}(0,T; L^{2}(\om)) \cap  H^{1}(0,T; W^{-1,1}(\om))\hookrightarrow  C([0,T];\,\, L^{2}_{weak}(\om)).\ee  
 
Recalling   \eqref{1.8} and the \eqref{rhogamma},   we have 
\be\la{m14}\ba 0&\le \overline{\lim_{t\rightarrow 0}}\int_{\om}|\sqrt{\n} u-\sqrt{\n_{0}} u_{0}|^{2}\,dx\\
&=2\overline{\lim_{t\rightarrow 0}}\left(\int_{\om}\left(\frac{1}{2}\n |u|^{2}+\frac{1}{\ga-1}\n^{\ga}\right)\,dx-\int_{\om}\left(\frac{1}{2}\n_{0}|u_{0}|^{2}+\frac{1}{\ga-1}\n_{0}^{\ga}\right)\,dx\right)\\
&\quad +2\overline{\lim_{t\rightarrow 0}}\left(\int_{\om} \sqrt{\n_{0}} u_{0}(\sqrt{\n_{0}} u_{0}-\sqrt{\n} u)\,dx+\frac{1}{\ga-1}\int_{\om}( \n_{0}^{\ga}-\n^{\ga})\,dx\right)\\
& \le 2\overline{\lim_{t\rightarrow 0}} \int_{\om} \sqrt{\n_{0}} u_{0}(\sqrt{\n_{0}} u_{0}-\sqrt{\n} u)\,dx,\ea\ee
and furthermore,  
\be\la{m15}\begin{split} \overline{\lim_{t\rightarrow 0}} \int_{\om} \sqrt{\n_{0}} u_{0}(\sqrt{\n_{0}} u_{0}-\sqrt{\n} u)\,dx & =\overline{\lim_{t\rightarrow 0}} \int_{\om} u_{0} ( \n_{0} u_{0}-\n u)\,dx +\overline{\lim_{t\rightarrow 0}} \int_{\om} u_{0} \sqrt{\n} u(\sqrt{\n}-\sqrt{\n_{0}})\,dx \\
& =0,\end{split}\ee
owing to    
\eqref{m13},   \eqref{rhogamma} and  \eqref{1.5s}.  Therefore,  from 
 \eqref{m14},  \eqref{m15}, and \eqref{m13}  we deduce that
 \be\label{contat0}
 (\sqrt{\rho}u)(t) \to (\sqrt{\rho}u)(0) \quad \text{ strongly in }\ L^2(\om) \ \ \text{as } t \to 0^+.
 \ee

Similarly, one has the right temporal continuity of $\sqrt{\rho} u$ in $L^2$, that is, for any $t_0\ge 0$,
 \be\label{rightcont}
 (\sqrt{\rho}u)(t) \to (\sqrt{\rho}u)(t_0) \quad \text{ strongly in }\ L^2(\om) \ \ \text{as } t \to t_0^+.
 \ee
 
Now for $t_0>0$, we choose some positive $\tau$ and $\alpha$ such that $\tau + \alpha < t_0$ and define the following test function 

\noindent\begin{minipage}{.55\textwidth}
\begin{equation*}
\psi_\tau(t) = \left\{\begin{array}{cl}
0, & 0 \le t \le \tau,\\
\displaystyle{t-\tau \over \alpha}, & \tau \le t \le \tau+\alpha,\\
1, & \tau+\alpha \le t \le t_0, \\
\displaystyle{t_0 - t \over \alpha}, & t_0 \le t \le t_0+ \alpha,\\
0, & t_0 + \alpha \le t.
\end{array}\right.
\end{equation*}
\end{minipage}
\noindent\begin{minipage}{.4\textwidth}
\vspace{.25cm}
\includegraphics[scale=0.65]{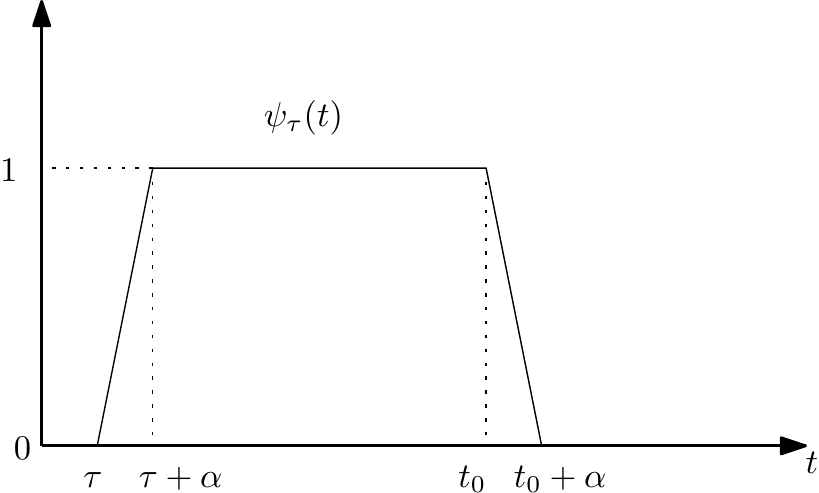}
\vspace{.15cm}
\end{minipage}

Substituting  the above test function into \eqref{2.3f} we obtain that
\begin{align*}
& {1\over \alpha}\int^{\tau+\alpha}_{\tau}\int_{\om} \left( {1\over2}\rho|u|^2 + {\rho^\gamma \over \gamma - 1} \right)\,dxds - {1\over\alpha} \int^{t_0+\alpha}_{t_0} \int_{\om} \left( {1\over2}\rho|u|^2 + {\rho^\gamma \over \gamma - 1} \right)\,dxds \\
= & -\int^{t_0+\alpha}_{\tau} \int_\om \psi_\tau \left( \mu|\na u|^{2} +(\mu+\lambda)|\div u|^{2} \right)\,dxds = D(\tau) - D(t_0 + \alpha),
\end{align*}
where $D$ is defined in \eqref{defnED}.

Sending $\alpha \to 0$, using the right continuity of $\sqrt{\rho}u$ in $L^2$ and the continuity of $\rho^\gamma$ in $L^1$ (cf. \eqref{rhogamma} and \eqref{rightcont}), and the continuity of $D(t)$, one has
\begin{equation*}
E(\tau) - E(t_0) = D(\tau) - D(t_0).
\end{equation*}
Finally sending $\tau \to 0$, from \eqref{contat0} we have 
$$(E - D)(t_0) = (E-D)(0),$$
which is exactly \eqref{energy}, and hence we complete the proof of Theorem \ref{t1}. 
 
 \bigskip
 
\appendix

\section{Application to the incompressible Navier--Stokes equations}

In this appendix, we apply our global approximation method to the incompressible Navier--Stokes equations posed on a bounded domain:
\begin{equation}\label{1.1fu}
\left\{\ba
& u_t +{\rm div}(u\otimes u )+\na P-\nu\lap u=0, \\
& {\rm div} u=0,\\
& u|_{\partial \om} = 0, \\
& u(x,0) = u_0(x), \quad \div u_0 = 0. 
\ea \right. 
\end{equation}

It is well-known that the Leray-Hopf weak solution $u$ to \eqref{1.1fu} satisfies 
\begin{equation}\label{1.12fu}
u \in L^{\infty}\left(0,T;  L^{2}(\om)\right) \cap L^{2}\left(0,T;H_{0}^{1}(\om) \right)
\end{equation}  
and the following energy inequality
\be\la{1.8fu} \int_{\om} \frac{1}{2} |u|^{2} dx+\nu \int_{0}^{t}\int_{\om}|\na u|^{2} dxds\le  \int_{\om} \frac{1}{2} |u_{0}|^{2} dx.\ee

We first recall a result of \cite[Theorem 1]{mauro} regarding the pressure field associated to the Leray--Hopf solution of \eqref{1.1fu}.
\begin{theorem}(\cite[Theorem 1]{mauro})\label{thm_mauro}
Assume that $\om$ is  an  open, bounded domain with   $C^{2}$ boundary $\p \om$, and $u$ is a Leray--Hopf solution of \eqref{1.1fu}. Then there exists a pressure field $P \in L^r(0,T; W^{1,s}(\om))$ with
\be\la{range}
{3\over s} + {2\over r} = 4, \qquad {4\over 3} < s < {3\over2},
\ee
such that for all $\varphi \in C^\infty_0((0, T)\times \om)$,
\begin{equation*}
\int^T_0\int_\om \left( u\cdot \partial_t\varphi + u\otimes u : \nabla \varphi + P \div\varphi + \nu u \cdot \Delta \varphi \right)\,dxdt = 0.
\end{equation*}
\end{theorem}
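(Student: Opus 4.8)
The plan is to recover the pressure pointwise in time directly from the momentum equation, and then to estimate it by splitting it into an \emph{interior} part, which absorbs the full strength of the convective nonlinearity and lands on the scaling line $\tfrac3s+\tfrac2r=4$, and a \emph{harmonic boundary corrector} coming from the mismatch between $\lap$ and the Stokes operator near $\p\om$. The intrinsic regularity ceiling of that corrector is precisely what confines $s$ to the window $\tfrac43<s<\tfrac32$ and, through the scaling line, pins $r$ to $(1,\tfrac87)$.

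First I would produce $P$ as a distribution. Testing $\eqref{1.1fu}_1$ against divergence-free $\vp\in C^\infty_0((0,T)\times\om)$ shows, as for any Leray--Hopf solution, that the vector distribution $u_t+\div(u\otimes u)-\nu\lap u$ annihilates all solenoidal test fields; by de Rham's theorem there is a $P$, unique up to a function of $t$, with $\na P=-u_t-\div(u\otimes u)+\nu\lap u$ in $\mathcal D'$, which after integrating by parts is exactly the weak identity claimed in Theorem \ref{thm_mauro}. (To avoid the time-derivative, one first runs this on the time-integrated equation, which involves only honest functions of $x$.) For the quantitative upgrade, apply for a.e.\ $t$ the complementary Helmholtz projection $I-\mathbb P$, bounded on $L^s(\om)$ for $1<s<\infty$ since $\p\om\in C^2$. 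Since $u$, hence $u_t$, is solenoidal with vanishing normal trace (from $u|_{\p\om}=0$), the term $(I-\mathbb P)u_t$ drops, leaving
\[
\na P=-(I-\mathbb P)\big(u\cdot\na u\big)+\nu\,(I-\mathbb P)\lap u=:\na P_1+\nu\,\na p_h .
\]
The piece $P_1$ solves the Neumann problem $\lap P_1=-\div(u\cdot\na u)$ in $\om$ with $\p_n P_1=-(u\cdot\na u)\cdot n$ on $\p\om$, and the no-slip condition together with $\div u=0$ forces $(u\cdot\na u)\cdot n=0$ on $\p\om$; hence $\|\na P_1(t)\|_{L^s(\om)}\le C\|u\cdot\na u(t)\|_{L^s(\om)}$ by the standard $L^s$-Neumann estimate. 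Interpolating \eqref{1.12fu} gives $u\in L^{p_\theta}(0,T;L^{q_\theta}(\om))$ with $\tfrac{2}{p_\theta}+\tfrac{3}{q_\theta}=\tfrac32$, $2\le q_\theta\le6$; pairing this with $\na u\in L^2_{t,x}$ via H\"older puts $u\cdot\na u$ exactly on the line $\tfrac3s+\tfrac2r=4$, with $q_\theta=6$ corresponding to $s=\tfrac32$, so $\na P_1$ has the advertised integrability across the full scaling window $1\le s\le\tfrac32$.

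The piece $p_h$ is harmonic in $\om$ for a.e.\ $t$ (its Laplacian is $\lap\div u=0$), and $\na p_h=(I-\mathbb P)\lap u$ is the gradient component of the Helmholtz decomposition of $\lap u=\div(\na u)$, with Neumann data $\p_n p_h=(\lap u)\cdot n$ on $\p\om$. Using $u|_{\p\om}=0$ (so the tangential part of $\na u$ vanishes there) and $\div u=0$ (so $\p_n u\cdot n=0$ on $\p\om$), this boundary datum reduces to tangential derivatives of the tangential trace of $\p_n u$; since the only input is $\na u\in L^2(\om)$, it sits in a negative-order boundary space, and the sharp $L^s$ theory for the Laplace--Neumann problem in a $C^2$ domain yields $p_h\in W^{1,s}(\om)$ precisely for $s<\tfrac32$, the $L^2$-in-time bound on $\na u$ converting into the time exponent through the same relation $\tfrac3s+\tfrac2r=4$ and forcing $s$ above $\tfrac43$. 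Setting $P:=P_1+\nu p_h$ (normalized to mean zero over $\om$) and checking that $\na P$ reproduces $-u_t-\div(u\otimes u)+\nu\lap u$ closes the argument.

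\textbf{The main obstacle.} Everything except the corrector $p_h$ is soft. The real difficulty is extracting a full spatial derivative on a pressure whose only data is $\na u\in L^2(\om)$, knowing that the boundary contribution genuinely saturates at $W^{1,3/2}$ even in smooth domains because $\mathbb P\lap\neq\lap$ near $\p\om$. Proving the sharp $L^s$ Neumann estimate for such rough boundary data, and tracking how the merely $L^2$-in-time control of $\na u$ is traded, along the critical line $\tfrac3s+\tfrac2r=4$, into the window $\tfrac43<s<\tfrac32$, is the technical heart of the proof.
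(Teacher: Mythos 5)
First, a point of order: the paper does not prove this statement at all --- Theorem \ref{thm_mauro} is imported verbatim from \cite[Theorem 1]{mauro} --- so your attempt has to be judged on its own terms rather than against an in-paper argument.

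Your skeleton (de Rham to produce $P$, the time-integrated equation to dodge $u_t$, then the Helmholtz splitting $\na P=-(I-\mathbb{P})(u\cdot\na u)+\nu(I-\mathbb{P})\lap u$, with the convective part put on the line $3/s+2/r=4$ by interpolating \eqref{1.12fu} against $\na u\in L^2_{t,x}$) is the standard and correct one, and the computation for $P_1$ is right. The genuine gap is exactly where you yourself locate ``the technical heart'': the corrector $p_h$ with $\na p_h=(I-\mathbb{P})\lap u$. You assert that sharp Neumann theory gives $p_h(t)\in W^{1,s}(\om)$ for all $s<3/2$ from $\na u(t)\in L^2(\om)$ alone, but no elliptic estimate delivers this. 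Since $\lap u(t)$ is only in $H^{-1}(\om)$, Ne\v{c}as' theorem gives $p_h(t)\in L^2(\om)$ and nothing more; a harmonic function that is merely in $L^2(\om)$ (equivalently, whose boundary data sit at or below the level of $L^2(\p\om)$) need not have its gradient in $L^s(\om)$ for any $s\ge1$ --- the half-space model with an $L^2$ boundary datum gives $\|\na p_h\|_{L^s}$ finite near the boundary only for $s<1$. Likewise, the trace of $\p_n u$ on $\p\om$, which your reduction of the Neumann datum $\lap u\cdot \vec{n}$ relies on, does not exist for $\na u\in L^2(\om)$. The route actually used in the literature that \cite{mauro} builds on (Sohr--von Wahl, Giga--Sohr) does not estimate $(I-\mathbb{P})\lap u$ in isolation: it invokes maximal $L^r_tL^s_x$ regularity for the instationary Stokes system, which bounds $u_t$, $\na^2u$ and $\na P$ \emph{together} in terms of the forcing $-u\cdot\na u$ and of $u_0$, and it is the fact that $u_0$ is only in $L^2$ (the initial layer of the Stokes semigroup) that generates the lower constraint $s>4/3$, i.e. $r<8/7$. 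This is an intrinsically parabolic mechanism; a pointwise-in-time elliptic argument cannot see it.

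A second, related defect is that your bookkeeping does not actually produce the stated window. By your own claimed estimate the corrector would obey $\|p_h(t)\|_{W^{1,s}}\lesssim\|\na u(t)\|_{L^2}\in L^2_t$, i.e. $r=2$, and the point $(r,s)=(2,s)$ lies on the line $3/s+2/r=4$ only for $s=1$; meanwhile the convective part alone covers the whole range $1<s<3/2$. So nothing in the argument singles out $4/3<s<3/2$, and the sentence asserting that the $L^2$-in-time control of $\na u$ ``forces $s$ above $4/3$ through the relation $3/s+2/r=4$'' is a restatement of the target, not a derivation. Until the viscous contribution to the pressure is estimated by the coupled Stokes maximal-regularity argument (with the initial-data term tracked explicitly), the proof is incomplete at precisely the step that determines the exponent range in \eqref{range}.
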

\begin{remark}
The additional smoothness of the boundary is assumed to ensure the existence of the Leray--Hopf weak solution of \eqref{1.1fu}. 
\end{remark}
\begin{remark}
Another important implication of Theorem \ref{thm_mauro} is that it allows us to use the test function $\psi_\tau\phi_\delta [u]^\varepsilon$ we introduced in the previous sections which is not solenoidal to test against the incompressible Navier-Stokes equations. 
\end{remark}
 
The main result concerning energy equality of \eqref{1.1fu} is 
\begin{theorem}\la{t2} Assume that $\om$ is  an  open, bounded domain with   $C^{2}$ boundary $\p \om$,  and   $u$ is a Leray-Hopf weak solution of \eqref{1.1fu}. 
Then the equality in \eqref{1.8fu} is achieved, provided that
\be\la{fu2} u\in L^{p}(0,T;L^{q}(\om)),\quad\frac{1}{p}+\frac{1}{q}\le \frac{1}{2},\quad  4\le q,\ee 
and the associated pressure given in Theorem \ref{thm_mauro} satisfies
\be\la{pressure}
P \in L^2(0,T; L^2(\partial \om)).
\ee
\end{theorem}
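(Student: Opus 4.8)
The plan is to mirror the three-step strategy used for the compressible case in Section \ref{sec_proof}, exploiting the fact that the test function $\psi_\tau\phi_\delta[u]^\varepsilon$ is admissible for \eqref{1.1fu} by Theorem \ref{thm_mauro}. First I would mollify the momentum equation using the global approximation of Proposition \ref{po}: localize, perform the interior mollification on $V_0$ and the shifted mollifications $\ti u_i^\varepsilon$ on the boundary charts $V_i$, and glue via the partition of unity to obtain, on each patch, an equation of the form $\p_t[u]^\varepsilon + (\text{convection})^\varepsilon + \na[P]^\varepsilon - \nu\lap[u]^\varepsilon = 0$ up to commutator remainders. Testing against $\psi_\tau\phi_\delta[u]^\varepsilon$ and integrating over $\om\times(0,T)$ yields the analogue of \eqref{2.1}, now with the extra pressure term $\int_0^T\int_\om \psi_\tau\phi_\delta[u]^\varepsilon\cdot\na[P]^\varepsilon$ (after integration by parts, $-\int_0^T\int_\om\psi_\tau\phi_\delta P\,\div[u]^\varepsilon - \int_0^T\int_\om\psi_\tau P\,[u]^\varepsilon\cdot\na\phi_\delta$; the first integrand vanishes in the limit since $\div u=0$ and $[u]^\varepsilon\to u$ in $L^2_{loc}H^1_{loc}$, so only the boundary piece survives).

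Second comes the $\varepsilon$-limit. The time-derivative term and the convection term are handled exactly as in Lemma \ref{l1}, but with the simplification $\rho\equiv 1$: the commutator estimates of Lemma \ref{lem} are not even needed for $\p_t$ (since $[u]^\varepsilon$ itself converges), and the convection commutator $\div(u\otimes u)^\varepsilon - \div(u\otimes[u]^\varepsilon)$ is killed by Lemma \ref{lemma2} together with \eqref{p9a}, \eqref{p9}, using only $u\in L^p_tL^q_x$ with $q\ge4$ and $\na u\in L^2_{t,x}$. The diffusion term is treated verbatim as in Lemma \ref{l3}. Using $\div u = 0$ and the continuity equation's incompressible analogue (just $\div u=0$), the $\varepsilon\to0$ limit produces the analogue of \eqref{m3}, namely
\begin{equation*}
-\tfrac12\int_0^T\!\!\int_\om\psi_\tau'\phi_\delta|u|^2 - \tfrac12\int_0^T\!\!\int_\om\psi_\tau\phi_\delta' |u|^2 (\,\cdot\,) - \int_0^T\!\!\int_\om\psi_\tau P\,u\cdot\na\phi_\delta - \nu\int_0^T\!\!\int_\om\psi_\tau\phi_\delta|\na u|^2 - \nu\int_0^T\!\!\int_\om\psi_\tau\,u\,\na u\cdot\na\phi_\delta = 0,
\end{equation*}
where I have schematically grouped the boundary-production terms carrying $\na\phi_\delta$.

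Third is the $\delta$-limit, which is where the hypothesis \eqref{pressure} enters and is the only genuinely new ingredient compared with \cite{yu2}. The terms $\int_0^T\int_\om\psi_\tau\n u\cdot\na\phi_\delta|u|^2$-type contributions and $\nu\int_0^T\int_\om\psi_\tau u\,\na u\cdot\na\phi_\delta$ are bounded as in \eqref{m3q} by $\|u/\mathrm{dist}(x,\p\om)\|_{L^2_{t,x}}$ times $(\int_0^T\int_{\{\mathrm{dist}<\delta\}}(|u|^4+|\na u|^2))^{1/2}$, which vanishes as $\delta\to0$ by Lemma \ref{lemma} (Hardy) and absolute continuity of the integral. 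The new boundary term is $\int_0^T\int_\om\psi_\tau P\,u\cdot\na\phi_\delta$: here I estimate $|P\,u\cdot\na\phi_\delta| \le |P|\,\big|\tfrac{u}{\mathrm{dist}(x,\p\om)}\big|$ supported in $\{\mathrm{dist}(x,\p\om)<\delta\}$, then apply Cauchy--Schwarz in space, use the Hardy inequality (Lemma \ref{lemma}) to control $\|u/\mathrm{dist}\|_{L^2(\om)}$ by $\|\na u\|_{L^2(\om)}$, and bound $\|P\|_{L^2(\{\mathrm{dist}<\delta\})}$. The point is that, as $\delta\to0$, a trace-type / Fubini argument in the collar neighborhood $\{\mathrm{dist}<\delta\}$, foliated by the level sets of the distance function (which is $C^1$ near $\p\om$ since $\p\om\in C^2$), shows $\tfrac1\delta\int_0^T\int_{\{\mathrm{dist}<\delta\}}|P|^2 \to \int_0^T\int_{\p\om}|P|^2$, which is finite by \eqref{pressure}; combined with the vanishing factor coming from the $L^2_{t,x}$-bound of $u/\mathrm{dist}$ restricted to the shrinking collar, the whole term tends to zero. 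I expect this collar/trace estimate coupling the shrinking-support integrability of $u/\mathrm{dist}$ with the boundary-trace integrability of $P$ to be the main obstacle; it must be done carefully because $P$ has only the low regularity $L^r_tW^{1,s}_x$ from Theorem \ref{thm_mauro} and the bound \eqref{pressure} is an additional hypothesis rather than something derived. Once the $\delta$-limit gives $(E-D)'=0$ in $\mathcal D'((0,T))$ with $E(t)=\tfrac12\int_\om|u|^2$, $D(t)=\nu\int_0^t\int_\om|\na u|^2$, Step 3 is concluded exactly as in the compressible case: $D\in C([0,T])$, the energy inequality \eqref{1.8fu} forces strong right-continuity $u(t)\to u(t_0)$ in $L^2$ as $t\to t_0^+$ (in particular at $t_0=0$), and the piecewise-linear test function argument upgrades $(E-D)'=0$ to the equality $(E-D)(t)=(E-D)(0)$ for every $t\in[0,T]$, which is \eqref{1.8fu} with equality.
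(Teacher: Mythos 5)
Your overall architecture (global mollification, testing with $\psi_\tau\phi_\de[u]^\ep$, the $\ep$-limit, the $\de$-limit via Hardy's inequality, and the final piecewise-linear-in-time argument) is exactly the paper's, and your treatment of the time-derivative, convection, and diffusion terms matches Lemma \ref{l4.1} and the analogue of Lemma \ref{l3}. The gap is in the pressure boundary term $\int_0^T\int_\om\psi_\tau\,P\,u\cdot\na\phi_\de$. For your Cauchy--Schwarz/Hardy estimate to close you need $P\in L^2(0,T;L^2(\om))$: the factor $\|u/\mathrm{dist}(\cdot,\p\om)\|_{L^2(\{\mathrm{dist}<\de\})}$ is only square-integrable in time, so the companion factor $\|P\|_{L^2(\{\mathrm{dist}<\de\})}$ must also be square-integrable in time. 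Theorem \ref{thm_mauro} gives only $P\in L^r(0,T;W^{1,s}(\om))$ with $r<8/7$ (from \eqref{range}), so this is not free. Moreover, your proposed collar/trace convergence $\frac1\de\int_{\{\mathrm{dist}<\de\}}|P|^2\to\int_{\p\om}|P|^2$ cannot be justified from this regularity plus \eqref{pressure} alone: the trace theorem for $W^{1,s}$ with $s<3/2$ controls $P$ on the level sets $\{\mathrm{dist}=h\}$ only in $L^{\tilde s}$ with $\tilde s<2$, and the hypothesis that the limiting trace at $h=0$ happens to lie in $L^2(\p\om)$ gives no $L^2$ control of the nearby slices $h>0$, which is precisely what the collar average sees.

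The paper's resolution, which your proposal is missing, is Proposition \ref{prop_pressure}: $P$ solves $-\lap P=\div\div(u\otimes u)$ in $\om$ with boundary datum $P|_{\p\om}\in L^2$, and the duality/transposition method yields the global elliptic estimate $\|P\|_{L^2(\om)}\le C(\|u\|_{L^4(\om)}^2+\|P|_{\p\om}\|_{L^2(\p\om)})$. Combined with $u\in L^4(0,T;L^4(\om))$ (which follows from \eqref{fu2} by interpolation, cf. \eqref{k7}) and the hypothesis \eqref{pressure}, this upgrades the pressure to $P\in L^2(0,T;L^2(\om))$; the boundary term then vanishes as $\de\to0$ simply because $\|u/\mathrm{dist}\|_{L^2(0,T;L^2(\{\mathrm{dist}<\de\}))}\to0$ by absolute continuity of the integral, while $\|P\|_{L^2(0,T;L^2(\om))}$ stays bounded --- no trace convergence on the collar is needed at all. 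Until you insert this elliptic estimate (or an equivalent mechanism transferring the boundary $L^2$ control of $P$ to a global space-time $L^2$ bound), the $\de$-limit of the pressure term does not close. The remainder of your argument is sound.
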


\begin{remark}
A notable difference between our Theorem \ref{thm_mauro} and the result in \cite{yu2}, as indicated in the Introduction, is that we do not need to assume any additional Besov regularity on the velocity to handle the boundary effects coming from the diffusion term. 
\end{remark}

\begin{remark}
From Theorem \ref{thm_mauro} we know that the pressure field enjoys sufficient regularity to define its trace on the boundary $P \in W^{1-{1\over s},s}(\partial \om) \subset L^s(\partial\om)$ for a.e. $t\in [0, T]$. In fact the fractional Sobolev embedding (see, e.g. \cite{nezza}) further implies that 
\[
P \in L^{\tilde s}(\partial\om) \quad \text{ where } \quad s \le \tilde s \le {2s \over 3 - s}.
\]
From \eqref{range} we see that $\tilde s < 2$. Here we need to assume a bit more integrability of the pressure trace (cf. \eqref{pressure}).
\end{remark}
\begin{remark}
Note that by interpolation we see that $L^2H^1 \cap L^4L^4$ lands in the Onsager-critical Besov spaces $L^3B^{1/3}_{3, r}$ for $1\le r < \infty$. It would be interesting to obtain the energy equality for velocities in the Onsager-critical Besov spaces $L^3B^{1/3}_{3, \infty}$ in the interior, a la Constantin et al. \cite{che}.
\end{remark}

The proof of Theorem \ref{t2} is  a slight modification of  that in Theorem \ref{t1}.  
We only  prove the Lemmas \ref{l4.1}--\ref{l4.2} below to address conditions \eqref{fu2}--\eqref{pressure} and the main  differences. An important ingredient in the argument is the global $L^p$ estimate of the pressure, which is given in the following proposition.
\begin{proposition}\label{prop_pressure}
Let the assumptions of Theorem \ref{t2} hold, then the pressure field satisfies
\be\la{fu6} \|P\|_{L^{2}(\om)}\le C \left(\| u\|^2_{L^{4}(\om)} + \|P|_{\partial\om}\|_{L^2} \right).\ee
\end{proposition}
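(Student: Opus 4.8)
The plan is to derive the pressure equation from \eqref{1.1fu} and then use standard elliptic ($L^p$) regularity on the domain $\om$, being careful about the boundary contribution since $u$ does not have a normal-derivative bound on $\partial\om$. Taking the divergence of the momentum equation in \eqref{1.1fu} and using $\div u = 0$, one obtains formally
\[
-\lap P = \div\div(u\otimes u) = \sum_{i,j}\partial_i\partial_j(u_iu_j) \quad \text{in } \om,
\]
with the Neumann-type boundary condition obtained by dotting the momentum equation with the outward normal $\vec n$ on $\partial\om$: since $u|_{\partial\om}=0$, the convective term $\div(u\otimes u)\cdot\vec n$ and $\partial_t u\cdot\vec n$ vanish on the boundary, leaving $\partial_{\vec n}P = \nu\,\lap u\cdot\vec n$ on $\partial\om$. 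This normal-derivative data is not controllable from the Leray--Hopf class, which is exactly why we instead work with the \emph{trace} $P|_{\partial\om}$ as prescribed data. So I would split $P = P_1 + P_2$, where $P_1$ solves $-\lap P_1 = \div\div(u\otimes u)$ with homogeneous Dirichlet condition $P_1|_{\partial\om}=0$, and $P_2$ is harmonic, $\lap P_2 = 0$ in $\om$, with $P_2|_{\partial\om} = P|_{\partial\om}$.

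For $P_1$: since $u\otimes u \in L^2(\om)$ by the hypothesis $u\in L^4(\om)$ and $\|u\otimes u\|_{L^2}\le\|u\|_{L^4}^2$, one has $\div\div(u\otimes u)$ is a second-order distributional derivative of an $L^2$ function, i.e. it lies in $H^{-2}(\om)$ — more precisely in the dual of $H^2_0(\om)$. The Dirichlet problem $-\lap P_1 = \partial_i\partial_j g_{ij}$ with $P_1|_{\partial\om}=0$ and $g_{ij}\in L^2(\om)$ has the Calderón--Zygmund estimate $\|P_1\|_{L^2(\om)}\le C\|g\|_{L^2(\om)}$; this is the $L^2$ case of the classical estimate for the composition of the (bounded) Riesz-type operators $\partial_i\partial_j(-\lap)^{-1}$ with Dirichlet boundary conditions on a $C^2$ domain (see, e.g., standard elliptic theory). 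Hence $\|P_1\|_{L^2(\om)}\le C\|u\|_{L^4(\om)}^2$. For $P_2$: the harmonic extension of the boundary trace obeys the maximal-regularity estimate $\|P_2\|_{L^2(\om)}\le C\|P|_{\partial\om}\|_{L^2(\partial\om)}$ for a bounded $C^2$ domain (in fact $\|P_2\|_{L^2(\om)}$ is controlled by an even weaker boundary norm, but $L^2(\partial\om)$ suffices and matches the hypothesis \eqref{pressure}). Adding the two estimates gives \eqref{fu6}.

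The main technical point — and the step I would be most careful about — is matching this decomposition with the pressure \emph{actually produced} by Theorem \ref{thm_mauro}: one must verify that $P$ constructed there coincides (up to the already-resolved ambiguity) with $P_1+P_2$, i.e. that $P$ is indeed the weak solution of the Dirichlet problem for the pressure Poisson equation with its own trace as boundary data. This is where the $C^2$ regularity of $\partial\om$ and the regularity class $P\in L^r(0,T;W^{1,s}(\om))$ from Theorem \ref{thm_mauro} are used: that class is exactly enough to make sense of the trace $P|_{\partial\om}$ (as noted in the remark following Theorem \ref{t2}) and to justify integration by parts identifying $P$ with $P_1+P_2$ for a.e. $t$. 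The remaining details — applying the $L^2$ Calderón--Zygmund and harmonic-extension bounds — are standard and time-independent, so they integrate in time without loss once \eqref{fu2} and \eqref{pressure} are in force.
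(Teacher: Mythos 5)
Your proposal is correct and follows essentially the same route as the paper: the paper simply observes that $P$ solves the Poisson problem $-\lap P = \div\div(u\otimes u)$ with $L^2$ boundary trace and invokes ``duality and the method of transposition'' (citing Escauriaza--Montaner) to get \eqref{fu6}, which is exactly the decomposition $P=P_1+P_2$ and the two duality estimates you spell out. Your additional care about identifying the pressure from Theorem \ref{thm_mauro} with the solution of this Dirichlet problem is a reasonable elaboration of a step the paper leaves implicit.
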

\begin{proof}
The pressure satisfies a Poisson problem together with certain boundary regularity.
 \[\ba
 & -\Delta P = \div\div(u\otimes u) \quad \text{ in } \ \om, \\
 & P|_{\partial \om} \in L^2.
 \ea
\] 
Using duality and the method of transposition (e.g. \cite[Lemma 2]{esc}) we see that
\[
\|P\|_{L^{2}(\om)}\le C \left( \|(u\otimes u)\|_{L^{2}(\om)} + \|P|_{\partial\om}\|_{L^2} \right)\le C \left(\| u\|^2_{L^{4}(\om)} + \|P|_{\partial\om}\|_{L^2} \right),
\]
completing the proof of the proposition.
\end{proof}

Let $u$ be a Leray--Hopf weak solution to \eqref{1.1fu}.  
The same deduction as \eqref{2.1}  yields    \be\la{2.1fu}  \ba
&\int_{0}^{T} \int_{\om} \psi_{\tau} \phi_{\de}[u]^{\ep}\p_{t}\left(\xi_{0}u^{\ep} +\sum_{i=1}^{k}\xi_{i} \ti{ u}_{i}^{\ep}\right)\\
&+\int_{0}^{T}\int_{\om} \psi_{\tau} \phi_{\de}[u]^{\ep}\left(\xi_{0}\div (u \otimes u)^{\ep}+\sum_{i=1}^{k}\xi_{i} \div (\ti {u} \otimes \ti{u})_{i}^{\ep}\right)\\
&+\int_{0}^{T}\int_{\om} \psi_{\tau}\phi_{\de}[u]^{\ep}\left(\xi_{0}\na P^{\ep} +\sum_{i=1}^{k}\xi_{i}\na  \ti{P}_{i}^{\ep} \right) \\
&-\int_{0}^{T}\int_{\om} \psi_{\tau} \phi_{\de}[u]^{\ep}\left(\xi_{0}\lap u^{\ep}+\sum_{i=1}^{k}\xi_{i}\lap \ti{u}_{i}^{\ep}\right)=0.\ea\ee

\begin{lemma} \la{l4.1} The convection term in \eqref{2.1fu} satisfies 
 \be\la{fu9}\ba &\lim_{\de\rightarrow0}\lim_{\ep\rightarrow0}\int_{0}^{T}\int_{\om} \psi_{\tau} \phi_{\de}[u]^{\ep}\left(\xi_{0}\div (u \otimes u)^{\ep}+\sum_{i=1}^{k}\xi_{i} \div (\ti {u} \otimes \ti{u})_{i}^{\ep}\right)=0.\ea\ee
 \end{lemma}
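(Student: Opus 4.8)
\textbf{Plan for the proof of Lemma \ref{l4.1}.}
The strategy mirrors the treatment of the convection term in the proof of Theorem \ref{t1}, in particular Lemma \ref{l1}, but is now much simpler because the density is absent. First I would split the mollified convection term by inserting and subtracting the ``diagonal'' quantity $\div(u\otimes[u]^\ep)$, writing
\be\la{plansplit}\ba
&\int_{0}^{T}\int_{\om} \psi_{\tau} \phi_{\de}[u]^{\ep}\left(\xi_{0}\div (u \otimes u)^{\ep}+\sum_{i=1}^{k}\xi_{i} \div (\ti {u} \otimes \ti{u})_{i}^{\ep}\right)\\
&= J_1 + \int_{0}^{T}\int_{\om}\psi_{\tau} \phi_{\de}[u]^{\ep}\, {\rm div}(u\otimes [u]^{\ep}),
\ea\ee
where $J_1$ collects all the commutator-type differences. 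The term $J_1$ is estimated exactly as $I_2$ in \eqref{m2}--\eqref{ws2b}: using the partition of unity \eqref{u2}, Lemma \ref{lem}, Lemma \ref{lemma2} (with the density replaced by the constant $1$), the interior convergence \eqref{6y}, and the global convergence \eqref{p9a}--\eqref{p9}, one obtains $\lim_{\ep\to 0} J_1 = 0$ for fixed $\tau,\de$. Here the hypothesis \eqref{fu2} with $q\ge 4$ guarantees that the relevant products lie in $L^1_{t,x}$, since $u\in L^\infty_tL^2_x\cap L^p_tL^q_x$ and $\na u\in L^2_{t,x}$, and the exponent balance $\frac1p+\frac1q\le\frac12$ provides enough time integrability.

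Next I would handle the diagonal term. Since $[u]^\ep$ is smooth in $V_i$, integration by parts gives
\be\la{plandiag}
\int_{0}^{T}\int_{\om}\psi_{\tau} \phi_{\de}[u]^{\ep}\, {\rm div}(u\otimes [u]^{\ep}) = -\int_{0}^{T}\int_{\om} (u\otimes[u]^\ep):\na\big(\psi_\tau\phi_\de [u]^\ep\big),
\ee
and expanding the gradient, the symmetric part pairs $u\cdot\na\tfrac{|[u]^\ep|^2}{2}$ against $\psi_\tau\phi_\de$. Using $\div u = 0$ one more integration by parts converts the interior piece into a term carrying $\na(\psi_\tau\phi_\de)$ only; schematically,
\be\la{plandiag2}
\int_{0}^{T}\int_{\om}\psi_{\tau} \phi_{\de}[u]^{\ep}\, {\rm div}(u\otimes [u]^{\ep}) = \frac12\int_0^T\int_\om u\cdot\na(\psi_\tau\phi_\de)\,|[u]^\ep|^2
= \frac12\int_0^T\int_\om \psi_\tau\, (u\cdot\na\phi_\de)\,|[u]^\ep|^2,
\ee
because $\psi_\tau$ depends only on $t$. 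Sending $\ep\to 0$, by \eqref{p9a} the right side converges to $\tfrac12\int_0^T\int_\om \psi_\tau (u\cdot\na\phi_\de)|u|^2$. Finally, sending $\de\to 0$, I invoke the Hardy inequality of Lemma \ref{lemma} together with $u\in L^2_tH^1_{0,x}$ and $u\in L^4_{t,x}$ (a consequence of \eqref{1.12fu} and \eqref{fu2} by interpolation, exactly as in Remark after \eqref{1.5}) to bound
\[
\left|\frac12\int_0^T\int_\om \psi_\tau (u\cdot\na\phi_\de)|u|^2\right|
\le C\left\|\frac{u}{dist(x,\p\om)}\right\|_{L^2_{t,x}}\left(\int_0^T\int_{\{dist(x,\p\om)<\de\}}|u|^4\right)^{1/2} \to 0
\]
as $\de\to 0$, by absolute continuity of the integral. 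Combining \eqref{plansplit}--\eqref{plandiag2} with $\lim_\ep J_1 = 0$ then yields \eqref{fu9}.

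\textbf{Main obstacle.} The routine-looking but delicate point is the commutator estimate for $J_1$: one must verify that every difference appearing there — the interior commutator $\div(u\otimes u)^\ep - \div(u\otimes u^\ep)$, the boundary commutators $\div(\ti u\otimes\ti u)_i^\ep - \div(u\otimes\ti u_i^\ep)$, and the gluing errors $\div(u\otimes(u^\ep-[u]^\ep))$ — converges to $0$ in $L^1_{t,x}$ under precisely the hypotheses \eqref{fu2}. This requires choosing the Hölder exponents in Lemma \ref{lem} and Lemma \ref{lemma2} so that $\na u\in L^2$ pairs with $u\in L^p_tL^q_x$ and the leftover $u$ factor lands in a dual space; the constraint $\frac1p+\frac1q\le\frac12$ is exactly what makes this work, and this is where the argument differs in bookkeeping (though not in spirit) from the compressible case. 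The rest — the integrations by parts using $\div u=0$, and the Hardy-inequality estimate near the boundary — is identical in structure to what was already carried out in Step 1 and Step 2 of the proof of Theorem \ref{t1}.
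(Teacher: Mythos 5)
Your proposal follows essentially the same route as the paper: split off the commutator differences and show they vanish as $\ep\to0$ via Lemma \ref{lem} together with the interpolated bound $u\in L^{4}(0,T;L^{4})$ coming from \eqref{fu2}, then reduce the diagonal term by integration by parts (using $\div u=0$ and $u\in H^1_0$) to $-\tfrac12\int\psi_\tau\,(u\cdot\na\phi_{\de})\,|[u]^{\ep}|^{2}$ and kill it as $\de\to0$ with the Hardy inequality of Lemma \ref{lemma}. The only slips are the sign of that boundary term (it should be $-\tfrac12$, not $+\tfrac12$) and the phrase ``density replaced by the constant $1$'' (in Lemma \ref{lem} the $W^{1,r_1}$ role is played by $u$ itself, paired with $\na u\in L^2$ and $u\in L^4$), neither of which affects the conclusion.
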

\begin{proof} 
A careful computation shows 
\bnn\ba &\int_{0}^{T}\int_{\om} \psi_{\tau} \phi_{\de}[u]^{\ep}\left(\xi_{0}\div (u \otimes u)^{\ep}+\sum_{i=1}^{k}\xi_{i} \div (\ti {u} \otimes \ti{u})_{i}^{\ep}-\div (u \otimes [u]^{\ep})\right)\\
&\le \int_{\tau}^{T-\tau} \|u\|_{L^{4}}\left( \|\div (u \otimes u)^{\ep}-\div (u \otimes u^{\ep})\|_{L^{\frac{4}{3}}(V_{0})}+\|\div (u \otimes (u^{\ep}- [u]^{\ep}))\|_{L^{\frac{4}{3}}(V_{0})}\right) \\
&\quad+\sum_{i=1}^{k}\int_{\tau}^{T-\tau} \|u\|_{L^{4}} \left(\| \div (\ti {u} \otimes \ti{u})_{i}^{\ep}-\div (u \otimes \ti{u}_{i}^{\ep})\|_{L^{\frac{4}{3}}(V_{i})}+\| \div ( u \otimes (\ti{u}_{i}^{\ep}-[u]^{\ep})\|_{L^{\frac{4}{3}}(V_{i})}\right)\\
&\le C\int_{0}^{T}\|u\|_{L^{4}}^{2}  \|\na u\|_{L^{2}} \\
&\quad + C\int_{\tau}^{T-\tau}\|u\|_{L^{4}} \left(\|\na u\|_{L^{2}}\|u^{\ep}- [u]^{\ep}\|_{L^{4}(V_{0})}+\|u\|_{L^{4}}\|\na (u^{\ep}- [u]^{\ep}) \|_{L^{4}(V_{0})}  \right)\\
&\quad   +C\sum_{i=1}^{k}\int_{\tau}^{T-\tau}\|u\|_{L^{4}} \left(\|\na u\|_{L^{2}}\|\ti{u}_{i}^{\ep}- [u]^{\ep}\|_{L^{\frac{2q}{q-2}}(V_{i})}+\|u\|_{L^{4}}\|\na (\ti{u}_{i}^{\ep}- [u]^{\ep}) \|_{L^{4}(V_{i})}  \right)\\
&\le C\int_{0}^{T}\left(\|\na u\|_{L^{2}}^{2}+ \|u\|_{L^{4}}^{4} \right),\ea\enn
owing to  Lemma \ref{lem} and Corollary \ref{co}.

By virtue of  \eqref{1.12fu} and  \eqref{fu2},  it satisfies   
\be\la{k7}\ba   \|u\|_{L^{4}(0,T;L^{4})}   \le \|u\|_{L^{\infty}(0,T;L^{2})}^{\frac{q-4}{2q-4}}\|u\|_{L^{\frac{4q}{2q-4}}(0,T;L^{q})}^{\frac{q}{2q-4}}\le C, \ea\ee
 provided $q\ge 4,\,\frac{4q}{2q-4}\le p$, which is equivalent to $q\ge 4,\,\frac{1}{q}+\frac{1}{p}\le \frac{1}{2}.$
Thus,   \be\la{k5} \lim_{\ep\rightarrow0}\int_{0}^{T}\int_{\om} \psi_{\tau} \phi_{\de}[u]^{\ep}\left(\xi_{0}\div (u \otimes u)^{\ep}+\sum_{i=1}^{k}\xi_{i} \div (\ti {u} \otimes \ti{u})_{i}^{\ep}-\div (u \otimes [u]^{\ep})\right)=0.\ee  

Next,  notice from  Lemma \ref{lemma}  that 
\be\la{k6}\ba \int_{0}^{T}\int_{\om} & \psi_{\tau} \phi_{\de}[u]^{\ep}\div (u \otimes[ u]^{\ep})
=-\frac{1}{2}\int_{0}^{T}\int_{\om} \psi_{\tau} u\cdot \na \phi_{\de}   |[ u]^{\ep}|^{2}\\
&\le C\|u\cdot \na \phi_{\de}\|_{L^{2}(0,T;L^{2})}\int_{0}^{T}\int_{\{x:\,dist(x,\p\om)<\de\}}  |[ u]^{\ep}|^{4}\rightarrow0\quad {\rm as}\,\,\de\rightarrow0.\ea\ee
Therefore,   \eqref{fu9} follows from \eqref{k5} and \eqref{k6}.
\end{proof}

 \begin{lemma}\la{l4.2} The pressure term in \eqref{2.1fu}  satisfies \be\la{k3}
 \lim_{\de\rightarrow0} \lim_{\ep\rightarrow0} \int_{0}^{T}\int_{\om} \psi_{\tau}\phi_{\de}[u]^{\ep}\left(\xi_{0}\na P^{\ep} +\sum_{i=1}^{k}\xi_{i}\na  \ti{P}_{i}^{\ep} \right)=0.
 \ee
 \end{lemma}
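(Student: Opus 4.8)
The plan is to mirror the structure of the diffusion/pressure lemmas from the compressible case. First I would split the pressure integral by introducing and subtracting the ``unregularized'' quantity $\na P$ tested against $[u]^\ep$:
\[
\int_0^T\int_\om \psi_\tau\phi_\de[u]^\ep\Big(\xi_0\na P^\ep+\sum_{i=1}^k\xi_i\na\ti P_i^\ep\Big)=J_1+\int_0^T\int_\om\psi_\tau\phi_\de[u]^\ep\cdot\na P,
\]
where $J_1$ collects the commutator-type differences $\xi_0\na(P^\ep-P)$ and $\xi_i\na(\ti P_i^\ep-P)$. By Proposition~\ref{prop_pressure} together with \eqref{pressure} and \eqref{k7}, the pressure lies in $L^2(0,T;L^2(\om))$ globally (in fact $P\in L^r(0,T;W^{1,s}(\om))$ from Theorem~\ref{thm_mauro}), so $\na P^\ep\to\na P$ and $\na\ti P_i^\ep\to\na P$ in $L^r_{loc}(0,T;L^s(V_0))$ and $L^r_{loc}(0,T;L^s(V_i))$ respectively, by the convergence statements \eqref{ws16}, \eqref{ws10}, \eqref{p9a}. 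Since $u\in L^\infty_tL^2_x\cap L^4_tL^4_x$, Hölder in the conjugate exponents controls $J_1$ and sends it to $0$ as $\ep\to0$, exactly as in the estimate \eqref{2.8q}. Note that because $\frac{4}{3}<s<\frac{3}{2}$ we may pair $\na P\in L^s$ with $[u]^\ep$ bounded in the dual Lebesgue space using that $u\in L^\infty_tL^2_x$ and the embedding on the bounded domain $\om$; this is where the restriction $q\ge4$ and the exponent relation \eqref{fu2} enter.

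Next I would take the $\ep$-limit in the remaining term. Using $[u]^\ep\to u$ in $L^p_{loc}(0,T;L^q(\om))$ and $L^2_{loc}(0,T;H^1_0(\om))$ (Proposition~\ref{po}), together with $\na P\in L^r_tL^s_x$, we obtain
\[
\lim_{\ep\to0}\int_0^T\int_\om\psi_\tau\phi_\de[u]^\ep\cdot\na P=\int_0^T\int_\om\psi_\tau\phi_\de\,u\cdot\na P.
\]
Now I would integrate by parts in space. Since $\div u=0$ and $\phi_\de$ has compact support in $\om$,
\[
\int_0^T\int_\om\psi_\tau\phi_\de\,u\cdot\na P=-\int_0^T\int_\om\psi_\tau\,P\,u\cdot\na\phi_\de,
\]
with no boundary term at this stage because $\phi_\de$ vanishes near $\p\om$. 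This reduces matters to showing that the near-wall integral involving $\na\phi_\de$ vanishes as $\de\to0$.

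The hard part is the $\de$-limit of $-\int_0^T\int_\om\psi_\tau P\,u\cdot\na\phi_\de$. Using $|\na\phi_\de|\le 2/\mathrm{dist}(x,\p\om)$ together with the Hardy inequality (Lemma~\ref{lemma}) applied to $u\in W_0^{1,2}(\om)$, I would bound
\[
\Big|\int_0^T\int_\om\psi_\tau P\,u\cdot\na\phi_\de\Big|\le C\Big\|\tfrac{u}{\mathrm{dist}(x,\p\om)}\Big\|_{L^2(0,T;L^2)}\Big(\int_0^T\int_{\{\mathrm{dist}(x,\p\om)<\de\}}|P|^2\Big)^{1/2}\le C\|\na u\|_{L^2_{t,x}}\Big(\int_0^T\int_{\{\mathrm{dist}(x,\p\om)<\de\}}|P|^2\Big)^{1/2}.
\]
Because $P\in L^2(0,T;L^2(\om))$ by Proposition~\ref{prop_pressure}, the set $\{\mathrm{dist}(x,\p\om)<\de\}$ has measure tending to $0$, so the last factor goes to $0$ as $\de\to0$ by absolute continuity of the Lebesgue integral, giving \eqref{k3}. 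The delicate point is precisely that one needs the \emph{global} $L^2_tL^2_x$ bound on the pressure rather than merely interior regularity; this is exactly what Proposition~\ref{prop_pressure} supplies, and it is where the boundary assumption \eqref{pressure} on $P|_{\p\om}$ is used. Compared with \cite{yu2}, this is how we trade the Besov assumption on $u$ for an integrability assumption on the pressure trace.
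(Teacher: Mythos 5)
Your overall architecture (a commutator part plus a main part, integration by parts onto $\na \phi_{\de}$, and the $\de$-limit via the Hardy inequality combined with the global $L^{2}$ pressure bound of Proposition \ref{prop_pressure}) matches the paper, and your final $\de\to0$ step is exactly the paper's. However, there is a genuine gap in your $\ep$-limit: you route the argument through the full pressure gradient $\na P$. The only gradient information available is Theorem \ref{thm_mauro}, namely $P\in L^{r}(0,T;W^{1,s}(\om))$ with $\frac{3}{s}+\frac{2}{r}=4$ and $\frac43<s<\frac32$, which forces $1<r<\frac87$. To control $J_{1}$ uniformly in $\ep$, and to make sense of the limiting quantity $\int_{0}^{T}\int_{\om}\psi_{\tau}\phi_{\de}\,u\cdot\na P$, you must pair $\na P\in L^{r}_{t}L^{s}_{x}$ with $u$ in the dual space $L^{r'}_{t}L^{s'}_{x}$, i.e.\ $r'>8$ in time and $s'=\frac{s}{s-1}>3$ in space. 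But \eqref{fu2} allows $p=q=4$, and no interpolation of $u\in L^{\infty}_{t}L^{2}_{x}\cap L^{4}_{t}L^{4}_{x}\cap L^{2}_{t}L^{6}_{x}$ produces a space with time exponent at least $8$ and spatial exponent above $3$ (the time exponent $\ge 8$ forces the spatial exponent below $3$). So the integral $\int u\cdot\na P$ is not absolutely convergent under the stated hypotheses, and the claim that ``H\"older in the conjugate exponents controls $J_{1}$'' does not close: each $J_1$ is finite for fixed $\ep$ because $[u]^{\ep}$ is bounded, but there is no uniform bound that survives the limit.

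The paper's proof never touches $\na P$ quantitatively. It keeps the mollified pressure inside, writes the error terms $K_{1}$, $K_{2}$ against $\na(P^{\ep}-P)$, $\na(\ti{P}_{i}^{\ep}-P)$, $\na[P]^{\ep}$, and immediately integrates by parts, so that only the differences $P^{\ep}-[P]^{\ep}$ and $\ti{P}_{i}^{\ep}-[P]^{\ep}$ in $L^{2}_{loc}(0,T;L^{2})$ appear, multiplied by $[u]^{\ep}$, $\div[u]^{\ep}$, $\na\phi_{\de}$, $\na\xi_{i}$ --- all controlled by $u\in L^{2}(0,T;H_{0}^{1})$ and the global bound \eqref{control pressure}; these vanish as $\ep\to0$ by \eqref{p9}. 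The main term is likewise integrated by parts while still mollified, giving $-\int\psi_{\tau}\,u\cdot\na\phi_{\de}\,[P]^{\ep}$ directly. Your argument is repaired by doing the same: integrate by parts in $J_{1}$ and in the main term \emph{before} invoking any convergence, so that only $P$ itself in $L^{2}_{t,x}$ (Proposition \ref{prop_pressure}) is ever needed. The $W^{1,s}$ regularity of Theorem \ref{thm_mauro} should be used only to justify testing against the non-solenoidal function $\psi_{\tau}\phi_{\de}[u]^{\ep}$, not as a quantitative estimate.
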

 \begin{proof}
 From \eqref{pressure}, \eqref{fu6}, and \eqref{k7} we have
 \begin{equation}\la{control pressure}
 \|P\|_{L^{2}(\om)}\le C.
 \end{equation}

Next,  we write 
 \be\la{fu8}\ba &\int_{0}^{T}\int_{\om} \psi_{\tau}\phi_{\de}[u]^{\ep}\left(\xi_{0}\na P^{\ep} +\sum_{i=1}^{k}\xi_{i}\na  \ti{P}_{i}^{\ep} \right)\\
 &=\int_{0}^{T}\int_{\om} \psi_{\tau}\phi_{\de}[u]^{\ep}\left(\xi_{0}\na P^{\ep} +\sum_{i=1}^{k}\xi_{i}\na  \ti{P}_{i}^{\ep} -\na [P]^{\ep}\right) \\
 &\quad +\int_{0}^{T}\int_{\om} \psi_{\tau}\phi_{\de}([u]^{\ep}-u)\cdot \na [P]^{\ep} +\int_{0}^{T}\int_{\om} \psi_{\tau}\phi_{\de}u\na [P]^{\ep}\\
 &=: K_{1}+K_{2}-\int_{0}^{T}\int_{\om} \psi_{\tau}u\cdot \na \phi_{\de} [P]^{\ep}.\ea\ee
 
The terms on the right-hand side will be treated as follows:
 
First,  by  \eqref{u2}, \eqref{1.12fu} and \eqref{control pressure},
 \bnn\ba K_{1}&=\int_{0}^{T}\int_{\om} \psi_{\tau}\phi_{\de}[u]^{\ep}\left(\xi_{0}\na( P^{\ep} -P)+\sum_{i=1}^{k}\xi_{i}\na  (\ti{P}_{i}^{\ep} - P)\right)\\
 &= - \int_{0}^{T}\int_{\om} \psi_{\tau}\left(\na\phi_{\de}[u]^{\ep}\xi_{0}+\phi_{\de}\div [u]^{\ep}\xi_{0}+\phi_{\de}[u]^{\ep}\na \xi_{0}\right) ( P^{\ep} -P) \\
 &\quad -\sum_{i=1}^{k}\int_{0}^{T}\int_{\om} \psi_{\tau}\left(\na\phi_{\de}[u]^{\ep}\xi_{i}+\phi_{\de}\div [u]^{\ep}\xi_{i}+\phi_{\de} [u]^{\ep}\na \xi_{i}\right) (\ti{P}_{i}^{\ep} -P) \\
 &\le  C(\de) \left(\|P^{\ep} -[P]^{\ep}\|_{L^{2}_{loc}(0,T;L^{2}(V_{0}))}+\sum_{i=1}^{k}\|\ti{P}_{i}^{\ep} -[P]^{\ep}\|_{L^{2}_{loc}(0,T;L^{2}(V_{i}))}\right)\ea\enn
which  implies $\lim_{\ep\rightarrow0}K_{1}=0$ due to \eqref{k7}  and \eqref{p9}. 

The limit $\lim_{\ep\rightarrow0}K_{2}=0$ can be proved using similar arguments. 

Finally,  by \eqref{1a}, \eqref{1.12fu},  the Hardy inequality, and \eqref{control pressure} it follows that  
\bnn\ba \int_{0}^{T}\int_{\om} \psi_{\tau}u\cdot \na \phi_{\de}   P\le  C\|u\cdot \na \phi_{\de}\|_{L^{2}(0,T;L^{2})}\int_{0}^{T}\int_{\{x:\,dist(x,\p\om)<\de\}} |P|^{2}\rightarrow0\quad {\rm as}\,\,\de\rightarrow0,\ea\enn
proving the lemma.
\end{proof}

 \bigskip
 
\section*{Acknowledgement} 
R. M. Chen would like to thank Theodore Drivas for helpful discussions. The work of R. M. Chen is partially supported by National Science Foundation under Grant DMS-1613375. The work of Z. Liang is partially supported by the fundamental research funds for central universities (JBK 1805001). The work of D. Wang is partially supported by the
National Science Foundation under grants DMS-1312800 and DMS-1613213. The work of R. Xu is partially supported by the National Natural Science Foundation of China (11871017). 

 \bigskip 
 
\begin {thebibliography} {99}

\bibitem{ADSW}
I. Akramov, T. Debiec, J. Skipper, and E. Wiedemann, Energy conservation for the compressible Euler and Navier-Stokes equations with vacuum, arXiv: 1808.05029.

\bibitem{BGGTW}
C. Bardos, P. Gwiazda, A. \'Swierczewska-Gwiazda, E. Titi, and E. Wiedemann, On the extension of Onsager's conjecture for general conservation laws, arXiv:1806.02483.

\bibitem{bar}
C. Bardos and  E. S.  Titi, Onsager's conjecture for the incompressible Euler equations in bounded domains, Arch. Ration. Mech. Anal.  \textbf{228}(1)  (2018),  197--207.

\bibitem{bar1}  C. Bardos,   E.  S. Titi, and E. Wiedemann,  Onsager's conjecture with physical boundaries and an application to the vanishing viscosity limit,    arXiv:1803.04939.


\bibitem{cl} R. M. Chen, Z. Liang and D. Wang, Vanishing viscosity for homogeneous incompressible 
flows with physical boundaries, in preparation. 


\bibitem{che}
A. Cheskidov, P. Constantin, S. Friedlander, and R. Shvydkoy, Energy conservation and Onsager's conjecture for the Euler equations, Nonlinearity \textbf{21} (2008), 1233--1252.

\bibitem{che2} A. Cheskidov and X. Luo,  On the energy equality for Navier--Stokes equations in weak-in-time Onsager spaces, arXiv: 1802.05785v2.

\bibitem{con}  P. Constantin,  W. E, and E. S. Titi, Onsager's conjecture on the energy conservation for solutions of Euler's equation, Comm. Math. Phys. \textbf{165}(1) (1994),  207--209.

%

\bibitem{di} R. J.  DiPerna and  P.-L. Lions,  Ordinary differential equations, transport theory and Sobolev spaces,   Invent. Math. \textbf{98} (1989), 511--547.

\bibitem{ngu}  T. D.  Drivas and H. Q.  Nguyen,  Onsager's conjecture and anomalous dissipation on domains with boundary,  arXiv: 180305416v1.

\bibitem{de1} T. D.  Drivas and G. Eyink. Cascades and Dissipative Anomalies in Compressible Fluid Turbulence, Phys. Rev. X 8, 011023 (2018).

\bibitem{de2} T. D.  Drivas and G. Eyink. An Onsager Singularity Theorem for Turbulent Solutions of Compressible Euler Equations,  Comm. Math. Phys., \textbf{359} (2018), 733--763.

\bibitem{de} T. D.  Drivas and G. Eyink. An Onsager singularity theorem for Leray solutions of incompressible Navier--Stokes, arXiv:1710.05205.

\bibitem{du} J. Duchon and  R. Robert, Inertial energy dissipation for weak solutions of incompressible Euler and Navier--Stokes equations, Nonlinearity  \textbf{13} (2000), 249--255.

\bibitem{esc} L. Escauriaza and S. Montaner, Some remarks on the $L^p$ regularity of second derivatives of solutions to non-divergence elliptic equations and the Dini condition, Rend. Lincei. Mat. Appl., \textbf{28} (2017), 49--63.

\bibitem{evans} L. C.   Evans, Partial differential equations, Second edition, Graduate Studies in Mathematics, \textbf{19}. American Mathematical Society, Providence, RI, 2010.

\bibitem{ey} G.  Eyink. Energy dissipation without viscosity in ideal hydrodynamics: I. Fourier analysis and local energy transfer, Phys. D \textbf{78} (1994), 222--240.

\bibitem{Feireisl1} E. Feireisl, {Dynamics of Viscous Compressible Fluids}  Oxford Lecture Series in Mathematics and its Applications, 26. Oxford Science Publications. The Clarendon Press, Oxford University Press, New York, 2004.

\bibitem{fei} E. Feireisl, A. Novotn$\acute{y}$, and H. Petzeltov$\acute{a}$, On the existence of globally defined weak solutions to the Navier--Stokes equations,  J. Math. Fluid Mech.  \textbf{3} (2001),  358--392.

\bibitem{fei1}  E. Feireisl, P. Gwiazda,  A. Swierczewska-Gwiazda, and  E. Wiedemann, Regularity and energy conservation for the compressible Euler equations,  Arch. Ration. Mech. Anal. \textbf{223} (2017), 1375--1395.
 
%


\bibitem{ka} Y. Kaneda, T. Ishihara, M. Yokokawa, K. Itakura, and A. Uno. Energy dissipation rate and energy spectrum in high resolution direct numerical simulations of turbulence in a periodic box, Phys. Fluids, \textbf{15} (2003), L21--L24.

\bibitem{ku} A. Kufner,  O.  John,  and S.  Fu$\breve{c}\acute{i}$k,    Function Spaces, Academia, Prague  (1977).

\bibitem{lady} O. A. Lady$\breve{\textrm{z}}$enskaja, V. A. Solonnikov and N. N. Ural'ceva. Linear and quasilinear equations of parabolic type, translated from the Russian by S. Smith, Translations of
Mathematical Monographs, Vol. 23, American Mathematical Society, Providence, RI, 1968.

\bibitem{leray}  J. Leray,  Sur le mouvement dun liquide visqueux emplissant lespace,   Acta Math.   \textbf{63}1 (1934), 193--248.

\bibitem{shv}   T. M.   Leslie and  R. Shvydkoy, The energy balance relation for weak solutions of the density-dependent  Navier--Stokes equations,  J. Differ. Equ. \textbf{261}  (2016), 3719--3733.

\bibitem{shv2} T. M.   Leslie and  R. Shvydkoy, Conditions implying energy equality for weak solutions of the Navier--Stokes equations, SIAM J. Math. Anal., \textbf{50} (2018), 870--890.

\bibitem{lions} J. L. Lions, Sur la r\'egularit\'e et l`unicit\'e des solutions turbulentes des \'equations de Navier Stokes, Rend. Sem. Mat. Univ. Padova, \textbf{30} (1960), 16--23.

 \bibitem{lion1}  P.-L.  Lions, Mathematical topics in fluid mechanics, Vol. \textbf{1}. Incompressible models. Oxford Lecture
Series in Mathematics and its Applications, 3. Oxford Science Publications. The Clarendon Press,
Oxford University Press, New York, 1996.
 
 \bibitem{lion2} P.-L. Lions,  Mathematical topics in fluid mechanics, Vol. \textbf{2}. Compressible models. Oxford Lecture
Series in Mathematics and its Applications, 10. Oxford Science Publications. The Clarendon Press,
Oxford University Press, New York, 1998. 

\bibitem{mauro} J. A. Mauro, On the regularity properties of the pressure field associated to a Hopf weak solution to the Navier--Stokes equations, Pliska Stud. Math. Bulgar. \textbf{23} (2014), 95--118.

\bibitem{nezza} E. Di Nezza, G. Palatucci and E. Valdinoci, Hitchhiker's guide to the fractional Sobolev spaces, Bull. Sci. Math., \textbf{136} (2012), 521--573.
 
\bibitem{on} L. Onsager, Statistical Hydrodynamics, Nuovo Cimento (Supplemento)  \textbf{6} (1949), 279--287. 
 
 \bibitem{pe} B. Pearson, P. -$\mathring{\textrm{A}}$. Krogstad, and W. Van De Water. Measurements of the turbulent energy dissipation rate, Phys. Fluids, \textbf{14} (2002), 1288--1290.
 
 \bibitem{se} J. Serrin, The initial value problem for the Navier--Stokes equations, 1963 Nonlinear Problems (Proc.Sympos., Madison, WI., 1962) pp. 69--98   Univ. of Wisconsin Press, Madison, WI.
 
\bibitem{sr} K. R. Sreenivasan. On the scaling of the turbulence energy dissipation rate, Phys. Fluids, \textbf{27} (1984), 1048--1051.
 
\bibitem{shi}  M. Shinbrot, The energy equation for the Navier--Stokes system, SIAM J. Math. Anal. \textbf{5} (1974), 948--954.
 
\bibitem{sim} J. Simon,  Compact sets in the space $L^{p}(0,T; B)$,   Ann. Mat. Pura  Appl. \textbf{146} (1987), 65--96.
 
\bibitem{yu1}  C.  Yu,    Energy conservation for the weak solutions of the compressible Navier--Stokes equations,    Arch. Ration. Mech. Anal. \textbf{225}(3) (2017),   1073--1087.

\bibitem{yu3}  C.  Yu,   A  new proof to the energy conservation for the Navier--Stokes equations,  arXiv: 1604.05697v1.  
 
\bibitem{yu2}  C.  Yu,   The energy conservation for the   Navier--Stokes equations in bounded domains, arXiv: 1802.07661.

 \end {thebibliography}
\end{document}